\theoremstyle{plain}
\newtheorem{theorem}{Theorem}
\newtheorem{corollary}[theorem]{Corollary}
\newtheorem{lemma}[theorem]{Lemma}
\newtheorem{proposition}[theorem]{Proposition}
\theoremstyle{definition}
\newtheorem{remark}[theorem]{Remark}
\newtheorem*{remark*}{Remark}
\begin{document}\title[Upper bounds for the maximum of a random walk with negative drift]
{Upper bounds for the maximum of a random walk with negative drift}

\author[Kugler]{Johannes Kugler}
\address{Mathematical Institute, University of Munich,
Theresienstrasse 39, D-80333, Munich, Germany}
\email{kugler@math.lmu.de}
\author[Wachtel]{Vitali Wachtel}
\address{Mathematical Institute, University of Munich,
Theresienstrasse 39, D-80333, Munich, Germany}
\email{wachtel@math.lmu.de}

\date{\today }

\begin{abstract}
 Consider a random walk $S_n=\sum_{i=0}^n X_i$ with negative drift. 
 This paper deals with upper bounds for the maximum $M=\max_{n\ge 1}S_n$ of this random walk in different settings of power moment existences.
 As it is usual for deriving upper bounds, we truncate summands. 
 Therefore we use an approach of splitting the time axis by stopping times into intervals of random but finite length and then choose a level of truncation on each interval. 
 Hereby we can reduce the problem of finding upper bounds for $M$ to the problem of finding upper bounds for $M_\tau=\max_{n\le \tau}S_n$.
 In addition we test our inequalities in the heavy traffic regime in the case of regularly varying tails.
\end{abstract}

\keywords{Limit theorems, random walks, renewal theorem}
\subjclass{60G50; 60G52.}
\thanks{Supported by the DFG}
\maketitle

\section{Introduction and statement of results}
Let $\{S_n,n\geq0\}$ denote the random walk with increments $X_i$, that is,
$$
S_0:=0,\ S_n:=\sum_{i=1}^n X_i,\ n\geq1.
$$
We shall assume that $X_1,X_2,\ldots$ are independent copies of a random variable 
$X$ with distribution function $F$ and $a:=-\mathbf{E}[X]>0$. The random walk $S_n$ drifts to $-\infty$ and the total 
maximum $M:=\max_{k\geq0} S_k$ is finite almost surely.
The random variable $M$ plays a crucial role in a number of applications. For example, its distribution
coincides with the stationary distribution of the queue-length in simple queueing systems. Another 
important application comes from the insurance mathematics: Under some special restrictions on $X$ 
the quantity $\mathbf{P}(M>u)$ is equal to the ruin probability in the so-called Poisson model.

The tail-behaviour of $M$ has been studied extensively in the literature. 
The first result goes back, apparently, to Cramer and Lundberg (see, for example, Asmussen \cite{A00}): If
\begin{equation}
\label{Cramer}
\mathbf{E}[e^{h_0X}]=1\quad\text{for some }h_0>0,
\end{equation}
and, in addition, $\mathbf{E}[Xe^{h_0X}]<\infty$, then there exists a constant $c_0\in(0,1)$ such that
\begin{equation}
\label{Cramer-Lundberg}
\mathbf{P}(M>x)\sim c_0e^{-h_0x}\quad\text{as }x\to\infty.
\end{equation}
The case $\mathbf{E}[Xe^{h_0X}]=\infty$ has been considered recently by Korshunov \cite{K05}.

If (\ref{Cramer}) is not fulfilled, then one assumes that the distribution of $X$ is regular in some sense. To specify what regular means we recall some definitions and known properties. For their proofs we refer to Asmussen \cite{A00}.
Consider a distribution function $B$ on $\mathbb{R}$ and let $\overline{B}(x)=1-B(x)$ be the right tail of $B$. A distribution function $B$ with support $\mathbb{R}_+$ is called subexponential, if $\overline{B}(x)>0$ for all $x$ and
\begin{equation}
\label{subexponential}
 \lim_{x\to \infty} \frac{\overline{B^{*n}}(x)}{\overline{B}(x)}=n
\end{equation}
for all $n\ge 2$, where $\overline{B^{*n}}(x)$ is the $n$-fold convolution of $B$ with itself. For the subexponentiality it is sufficient to verify the equation 
(\ref{subexponential}) in the case $n=2$. All subexponential distributions are heavy-tailed, i.e. $\mathbf{E}[\exp(\epsilon X)] = \infty$ for all $\epsilon>0$,
hence subexponential distributions do not satisfy (\ref{Cramer}).
If (\ref{Cramer}) is not fulfilled, the most classical result for the asymptotics of $M$ is due to Veraverbeke \cite{V77}, who showed that if the integrated tail $G(x):=\int_x^\infty \overline{F}(u)du$
is subexponential, then
\begin{equation}\label{Veraverbeke}
\mathbf{P}(M>x)\sim\frac{1}{a}G(x)\quad\text{as }x\to\infty.
\end{equation}

In many situations one needs non-asymptotic properties of the distribution of $M$. Since the exact form of that
distribution is known in some special cases only, good estimates are required. Under condition (\ref{Cramer}) one
has for all $x>0$ the inequality
\begin{equation}
\label{Cramer-Lundberg2}
\mathbf{P}(M>x)\leq e^{-h_0x}.
\end{equation}

In the case when (\ref{Cramer})
is not fulfilled, upper bounds for $\mathbf{P}(M>x)$ have been derived by Kalashnikov \cite{K99} and by  Richards \cite{R09}. 
The approach in these papers is based on the representation of $M$ as a geometric sum of independent random variables:
\begin{equation}
\label{GS}
\mathbf{P}(M>x)=\sum_{k=0}^\infty q(1-q)^k\mathbf{P}(\chi_1^++\chi_2^+\ldots+\chi_k^+>x),
\end{equation}
where $\{\chi^+_k\}$ are independent random variables and $q=\mathbf{P}(M=0)$.
The main difficulty in this approach is the fact that one has to know the distribution of $\chi^+_k$ and the parameter $q$.
In some special cases this information can be obtained from the initial data. But in general one has to obtain appropriate estimates
for $q$ and $\mathbf{P}(\chi^+_1>x)$.

The main purpose of the present paper is to derive upper bounds for $\mathbf{P}(M>x)$ assuming the existence of power moments of $X$
only. Thereby we want to avoid the representation via geometric sums and use a supermartingale-construction instead.

As it is usual for deriving upper bounds, we are going to truncate summands and to use inequalities, which are based on truncated
exponential moments. But the problem is that we have infinitely many $X_i$'s. So we can not truncate all of them at the same 
level. Thus, we have to split the time axis into intervals of finite length and then choose a level of truncation on each 
of these intervals. One can take, for example, a deterministic strictly increasing sequence $k_n$ with $k_0=0$ and consider 
the intervals $I_n:=(k_n,k_{n+1}]$:
\begin{align}
\label{Split}
\nonumber
\mathbf{P}\left(M\geq x\right)&=\mathbf{P}\left(\bigcup_{k\geq0}\{S_k\geq x\}\right)
\leq\sum_{n=0}^\infty\mathbf{P}\left(\bigcup_{k\in I_n}\{S_k\geq x\}\right)\\
&\leq \sum_{n=0}^\infty\mathbf{P}\left(\max_{k\leq k_{n+1}}(S_k-ka)\geq x-k_{n}a\}\right).
\end{align}
Now, one can apply the Fuk-Nagaev inequalities, see \cite{N79}, to every probability in the last line.
It is clear that replacing $\sup_{k\in I_n}(S_k-ka)$ by $\sup_{k\leq k_{n+1}}(S_k-ka)$ is not too rough
if and  only if $k_{n+1}$ and $k_{n+1}-k_n$ are comparable. Thus, one has to take $k_n$ exponentially growing.
Using this approach with $k_n=x2^n$, Borovkov \cite{B76} obtained a version of the Markov inequality for $M$.

Our strategy is quite different and consists in splitting $[0,\infty)$ into random intervals defined by a sequence of stopping times.
More precisely, we introduce the stopping time 
$$
 \tau_z := \min\{k\ge 0:S_k\le -z\}, \quad z\le x.
$$
Let $M_\tau = \max_{1\le k\le \tau_z} S_{k}$. We split the tail probability
\begin{equation}
\label{eq1.0}
 \mathbf{P}\left(M > x \right) 
 \le \mathbf{P}\left(M_{\tau_z} > x \right)+\mathbf{P}\left(\max_{k\ge \tau_z} S_{k} > x \right)
\end{equation}
and can consider the continuation of the process $(S_k)$ beyond $\tau_z$ as a probabilistic replica of the entire process. 
By $S_{\tau_z}\le -z \ a.s.$ follows
$$
	\mathbf{P}\left(\max_{k\ge \tau_z} S_{k} > x \right)
	\le \mathbf{P}\left(M > x+z \right).
$$
As a result, we have
$$
	\mathbf{P}\left(M > x \right) 
	\le \mathbf{P}\left(M_\tau > x \right)
		+ \mathbf{P}\left(M > x+z \right),
$$
and inductively we conclude
\begin{equation}
\label{eq1.1}
	\mathbf{P}\left(M > x \right)
	\le \sum_{j=0}^{\infty} \mathbf{P}\left(M_\tau > x+j z \right).
\end{equation}
It is worth mentioning that the difference between (\ref{Split}) and (\ref{eq1.1}) is the same as between
Riemann and Lebesgue integrals: We do not fit the random walk $S_n$ into a fixed splitting of the time, but choose the splitting depending 
on the paths of the random walk.

A decomposition similar to (\ref{eq1.0}) has been used by Denisov \cite{D05} for deriving the asymptotics of $\mathbf{P}(M_{\tau_0}>x)$ 
from that of $\mathbf{P}(M\in [x,x-S_{\tau_0}))$. In the present paper we use the opposite approach: We obtain estimates for $\mathbf{P}(M>x)$
from the ones for $\mathbf{P}(M_{\tau_z}>x)$.

We now state our results on $M_\tau$.
\begin{theorem}
\label{T1}
Assume that $A_t:=\mathbf{E}[|X|^t]<\infty$ for some $t\in(1,2]$. For all $y$ satisfying $y^{t-1}\ge(e-1)A_t a^{-1}$
we have the following inequality:
\begin{align}\label{T1.1}
\nonumber
\mathbf{P}\left(M_\tau>x\right)&\leq 
\frac{A_t^{x/y}}{a^{x/y-1}}\mathbf{E}[\tau_z] y^{-1-(t-1)x/y}\log\left(1+ay^{t-1}/A_t\right)\\
&\hspace{1.5cm}+\left(1+\frac{A_t^{x/y}}{a^{x/y}}y^{-(t-1)x/y}\right)\mathbf{E}[\tau_z]\mathbf{P}(X>y).
\end{align}
\end{theorem}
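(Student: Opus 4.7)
My plan is to truncate at level $y$, handle the resulting ``big jump'' error by Wald's identity, and bound the remaining probability by an exponential Markov argument. Let $\tilde X_i := X_i\wedge y$ and $\tilde S_k := \sum_{i=1}^k \tilde X_i$. Since $\tilde X_i = X_i$ whenever $X_i\le y$, the paths of $S$ and $\tilde S$ coincide on $\{X_i\le y\;\forall\,i\le\tau_z\}$, so
\[
\mathbf{P}(M_\tau>x)\le\mathbf{P}\bigl(\max_{k\le\tau_z}\tilde S_k>x\bigr)+\mathbf{P}(\exists\,i\le\tau_z\colon X_i>y).
\]
Wald's identity applied to the i.i.d.\ indicators $\mathbf 1\{X_i>y\}$ controls the second probability by $\mathbf{E}[\tau_z]\mathbf{P}(X>y)$, providing the ``$1$'' contribution to the second summand of the theorem.

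For the first probability I would take $h>0$ and write $\mathbf{P}\bigl(\max_{k\le\tau_z}\tilde S_k>x\bigr)\le e^{-hx}\sum_{k\ge 1}\mathbf{E}\bigl[e^{h\tilde S_k}\mathbf 1\{k\le\tau_z\}\bigr]$. Since $\tilde X\le y$, a convex Taylor-type inequality of the form $e^u\le 1+u+(e^{hy}-1-hy)|u|^t/(hy)^t$, valid for $u\le hy$ and $t\in(1,2]$, together with $\mathbf{E}[\tilde X]\le-a$ and $\mathbf{E}[|\tilde X|^t]\le A_t$, yields a one-step bound on the moment generating function
\[
\psi(h):=\mathbf{E}[e^{h\tilde X}]\le 1-ha+A_t(e^{hy}-1-hy)/y^t+(e^{hy}-1)\mathbf{P}(X>y).
\]
I would then evaluate the series using the $\mathcal F_{k-1}$-measurability of $\{k\le\tau_z\}$ and a per-step Wald-type identity applied to $\sum_{k=1}^{\tau_z} e^{h\tilde X_k}$, obtaining a bound proportional to $\mathbf{E}[\tau_z]$ multiplied by the one-step error, while avoiding the blow-up factor $\psi(h)^{\tau_z}$ that a naive iteration would produce.

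Finally, I would choose $h=y^{-1}\log(ay^{t-1}/A_t)$, which is strictly positive thanks to the hypothesis $y^{t-1}\ge(e-1)A_t/a$. Then $e^{-hx}=(A_t/(ay^{t-1}))^{x/y}=A_t^{x/y}a^{-x/y}y^{-(t-1)x/y}$, exactly the exponential factor appearing in both terms of the theorem: it multiplies $(a/y)\mathbf{E}[\tau_z]\log(1+ay^{t-1}/A_t)$ in the first term and the extra $\mathbf{E}[\tau_z]\mathbf{P}(X>y)$ contribution in the second. The logarithmic factor emerges from substituting $hy=\log(ay^{t-1}/A_t)$ into the error term $(e^{hy}-1-hy)A_t/y^t$ and simplifying. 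The main technical obstacle will be the middle step: to retain linear-in-$\mathbf{E}[\tau_z]$ dependence one must exploit the negativity of the drift so that the exponential error $\psi(h)-1$ is absorbed by the $-ha$ term, and Wald must be applied to the correct quantity; keeping the constants sharp enough to obtain exactly the coefficient $(a/y)\log(1+ay^{t-1}/A_t)$ is the delicate point.
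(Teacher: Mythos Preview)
Your decomposition into a big-jump term and a truncated-maximum term, and the Wald bound $\mathbf{E}[\tau_z]\mathbf{P}(X>y)$ for the former, are exactly what the paper does. The real gap is the step you yourself flag as the ``main technical obstacle'': the inequality
\[
\mathbf{P}\Bigl(\max_{k\le\tau_z}\tilde S_k>x\Bigr)\le e^{-hx}\sum_{k\ge 1}\mathbf{E}\bigl[e^{h\tilde S_k}\mathbf 1\{k\le\tau_z\}\bigr]
\]
followed by ``a per-step Wald-type identity applied to $\sum_{k=1}^{\tau_z}e^{h\tilde X_k}$'' does not produce the stated bound. Wald controls $\sum_k e^{h\tilde X_k}$, not $\sum_k e^{h\tilde S_k}$; and even if $\psi(h)\le 1$ so that $e^{h\tilde S_k}$ is a supermartingale, the best you get from conditioning on $\mathcal F_{k-1}$ is $\sum_k\mathbf{E}[e^{h\tilde S_k}\mathbf 1\{k\le\tau_z\}]\le \mathbf{E}[\tau_z]$, hence $\mathbf{P}(\cdot)\le e^{-hx}\mathbf{E}[\tau_z]$. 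This is missing precisely the extra factor $ah$ that, after substituting $h_0=y^{-1}\log(1+ay^{t-1}/A_t)$, becomes the $\tfrac{a}{y}\log(1+ay^{t-1}/A_t)$ in front of the first term. Your guess that this logarithm comes from the error term $(e^{hy}-1-hy)A_t/y^t$ is incorrect; plugging your $h$ into that expression does not simplify to the displayed coefficient. Note also that your choice $h=y^{-1}\log(ay^{t-1}/A_t)$ drops the ``$1+$'', and your truncation $\tilde X=X\wedge y$ forces an additional $(e^{hy}-1)\mathbf{P}(X>y)$ into $\psi(h)$ that can spoil $\psi(h)\le 1$.

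The paper avoids all of this by a single optional-stopping argument rather than a union bound. It takes the supermartingale $W_k=\prod_{i\le k}e^{hX_i}\mathbf 1\{X_i\le y\}$ (which \emph{vanishes} after a big jump, so one only needs $\mathbf{E}[e^{hX},X\le y]\le 1$), stops it at $T=\min\{t_x,\tau_z,\sigma_y\}$ where $t_x$ is the first passage above $x$ and $\sigma_y$ the first big jump, and reads off
\[
1\ge \mathbf{E}[W_T]\ge e^{hx}\mathbf{P}\bigl(t_x<\tau_z\wedge\sigma_y\bigr)+\mathbf{E}[e^{hS_{\tau_z}}]-\text{(big-jump correction)}.
\]
The crucial gain is $1-\mathbf{E}[e^{hS_{\tau_z}}]\le -h\mathbf{E}[S_{\tau_z}]=ha\,\mathbf{E}[\tau_z]$ by convexity and Wald, which is exactly where the factor $ah$ (and hence the logarithm) enters. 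This yields the lemma
\[
\mathbf{P}(M_\tau>x)\le \Bigl(1+\tfrac{1}{e^{hx}-1}\Bigr)\mathbf{E}[\tau_z]\mathbf{P}(X>y)+\mathbf{E}[\tau_z]\,\tfrac{ah}{e^{hx}-1},
\]
and the theorem follows from the Fuk--Nagaev bound $\mathbf{E}[e^{hX},X\le y]\le 1-ha+(e^{hy}-1)A_t/y^t$, the choice $h_0=y^{-1}\log(1+ay^{t-1}/A_t)$ (the hypothesis $y^{t-1}\ge(e-1)A_t/a$ is equivalent to $h_0 y\ge 1$, which is what makes this $h_0$ admissible), and the elementary inequality $(1+u)^{x/y}\ge 1+u^{x/y}$.
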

\begin{remark}\label{R1}
We show in the proof that (\ref{T1.1}) remains true, if one replaces $a$ and $A_t$ by
$-\mathbf{E}[X,|X|\leq y]$ and $A_{t}(y)=\mathbf{E}[|X|^t,|X|\leq y]$ respectively. In this case the restriction
$y^{t-1}>(e-1)a^{-1}A_t$ should be replaced by $\mathbf{E}[X,|X|\leq y]<0$. The use of truncated
moments is more convenient in theoretical applications, but for deriving concrete estimates for $M$ it is easier to use full moments.
\hfill$\diamond$
\end{remark}

Fix $\alpha\in(0,1)$ and put $\beta=1-\alpha$.
\begin{theorem}
\label{T2}
Assume $\mathbf{Var}(X)<\infty$ and $A_{t,+}:=\mathbf{E}[X^t,X>0]<\infty$ for some $t>2$. 
\begin{itemize}
\item[(i)] If $y$ satisfies the condition
$$
\frac{2\alpha a}{e^t \mathbf{Var}(X)}\leq \frac{1}{y}\log\left(1+\frac{\beta a}{A_{t,+}}y^{t-1}\right),
$$
then
\begin{align}\label{T2.1}
\nonumber
\mathbf{P}\left(M_\tau>x\right)&\leq \frac{2\alpha a^2}{e^t \mathbf{Var}(X)}\mathbf{E}[\tau_z]
\left(\exp\left\{\frac{2\alpha a x}{e^t \mathbf{Var}(X)}\right\}-1\right)^{-1}\\
&\hspace{0.3cm}+\left(1+\left(\exp\left\{\frac{2\alpha a x}{e^t \mathbf{Var}(X)}\right\}-1\right)^{-1}\right)\mathbf{E}[\tau_z]\mathbf{P}(X>y).
\end{align}
\item[(ii)] If $y$ satisfies the condition
\begin{equation}
\label{T2.2.1}
\frac{2\alpha a}{e^t \mathbf{Var}(X)}\geq \frac{1}{y}\log\left(1+\frac{\beta a}{A_{t,+}}y^{t-1}\right),
\end{equation}
then
\begin{align}\label{T2.2}
\nonumber
\mathbf{P}\left(M_\tau>x\right)&\leq 
\frac{A_{t,+}^{x/y}\beta^{-x/y}}{a^{x/y-1}}\mathbf{E}[\tau_z] y^{-1-(t-1)x/y}\log\left(1+\beta a y^{t-1}/A_{t,+}\right)\\
&\hspace{1cm}+\left(1+\frac{A_{t,+}^{x/y}\beta^{-x/y}}{a^{x/y}}y^{-(t-1)x/y}\right)\mathbf{E}[\tau_z]\mathbf{P}(X>y).
\end{align}
\end{itemize}
\end{theorem}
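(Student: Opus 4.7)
The plan is to follow the same truncation-and-exponential-supermartingale recipe as in Theorem~1. Setting $\tilde X_i := X_i\mathbf{1}\{X_i\le y\}$ and $\tilde S_n := \sum_{i=1}^n\tilde X_i$, a standard Wald-type union bound gives
$$\mathbf{P}(M_\tau>x) \le \mathbf{P}(\tilde M_\tau>x) + \mathbf{E}[\tau_z]\mathbf{P}(X>y),$$
which already yields the $\mathbf{E}[\tau_z]\mathbf{P}(X>y)$ piece of both (T2.1) and (T2.2); the extra factor $1+(e^{cx}-1)^{-1}$ (and its (T2.2) counterpart) will emerge when the same bound is applied to the event in which the large jump coexists with $\{\tilde M_\tau>x\}$.

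The crux is a sharpened bound on $\phi(h):=\mathbf{E}[e^{h\tilde X}]$, which is where the hypothesis $t>2$ forces a new idea relative to Theorem~1. Since $(e^{hu}-1-hu)/u^t$ is no longer monotone in $u>0$ for $t>2$, I would split the positive range $(0,y]$ via the weights $\alpha,\beta$: on one part use the quadratic estimate $e^{hu}-1-hu\le \tfrac12 e^t(hu)^2$ (valid once $hu\le t$), controlled by $\mathbf{Var}(X)$, and on the complementary part use the $t$-th moment estimate governed by $A_{t,+}$ exactly as in Theorem~1. Together with the usual quadratic bound for $u\le 0$ and $\mathbf{E}[\tilde X]\le -a$, this produces a two-term estimate of the shape
$$\phi(h)\;\le\;1-ha+\alpha\,\frac{e^t\mathbf{Var}(X)}{2}\,h^2+\beta\,A_{t,+}\,\frac{e^{hy}-1-hy}{y^t}.$$
Two natural choices of $h$ make $\phi(h)\le 1$: the quadratic-dominated value $h=2\alpha a/(e^t\mathbf{Var}(X))$ and the tail-dominated value $h=y^{-1}\log(1+\beta ay^{t-1}/A_{t,+})$. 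The inequality (T2.2.1) is precisely the statement of which choice gives the smaller (hence binding) exponent, and this is the trichotomy between cases (i) and (ii).

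With $\phi(h)\le 1$ secured, $(e^{h\tilde S_n})$ is a nonnegative supermartingale. Iterating the one-step estimate $\mathbf{E}[e^{h\tilde S_n};\tau_z>n]\le\phi(h)^n$ and applying a union bound over $k\le\tau_z$ gives (T2.2) in case (ii) directly, by the same algebra as in Theorem~1 with $(a,A_t)$ replaced by $(\beta a,A_{t,+})$. In case (i), I would instead invoke the Wald identity
$$\mathbf{E}\Big[\sum_{k=0}^{\tau_z-1}e^{h\tilde S_k}\Big]=\frac{1-\mathbf{E}[e^{h\tilde S_{\tau_z}}]}{1-\phi(h)},$$
obtained from the martingale $e^{h\tilde S_n}-(\phi(h)-1)\sum_{k<n}e^{h\tilde S_k}$, and combine it with the identification $c:=h$ and the inequality $1-\phi(h)\ge 1-e^{-c}$ at the optimising $h$ to convert the raw geometric bound into the closed form $c\mathbf{E}[\tau_z]/(e^{cx}-1)$ claimed in (T2.1).

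The main obstacle will be the exponential moment estimate above: isolating the constant $e^t$ with the correct prefactor $\mathbf{Var}(X)$ requires the right pointwise inequality on $(0,y]$ and the right $\alpha\!:\!\beta$ split of the positive range, and the interplay between the two terms has to be matched exactly to the threshold (T2.2.1). A secondary technical issue in case (i) is the bookkeeping that transforms the raw supermartingale estimate of the form $e^{-hx}\phi(h)/(1-\phi(h))$ into the closed expression $c\mathbf{E}[\tau_z]/(e^{cx}-1)$; this needs the Wald identity above together with a careful treatment of the overshoot term $\mathbf{E}[e^{h\tilde S_{\tau_z}}]$ on the event that $\tilde S_{\tau_z}$ differs substantially from $S_{\tau_z}\le -z$.
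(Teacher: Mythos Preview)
Your overall architecture (truncate, build an exponential supermartingale, choose $h$) is right, but the key moment inequality is mis-assembled, and this propagates to the case analysis.

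The bound that actually comes out of the Nagaev-type argument has \emph{no} $\alpha,\beta$ on the variance and tail terms:
\[
\mathbf{E}[e^{hX},X\le y]\;\le\;1-ha+\frac{e^t\mathbf{Var}(X)}{2}\,h^2+\frac{e^{hy}-1-hy}{y^t}\,A_{t,+}.
\]
One cannot obtain $\alpha\,\mathbf{Var}(X)$ and $\beta\,A_{t,+}$ by ``splitting the positive range'': the function $u\mapsto(e^{hu}-1-hu)/u^t$ is not monotone on $(0,y]$ when $t>2$, so the $t$-th-moment estimate is not available on any subinterval containing a neighbourhood of $0$, while the quadratic estimate with constant $e^t$ requires $hu\le t$ and hence cannot cover the whole of $(0,y]$ with the stated constant. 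The weights $\alpha,\beta$ enter \emph{after} the moment bound, by splitting the drift: write $-ha=-\alpha ha-\beta ha$ and set
\[
f_1(h)=-\alpha ha+\frac{e^t\mathbf{Var}(X)}{2}\,h^2,\qquad
f_2(h)=-\beta ha+\frac{e^{hy}-1-hy}{y^t}\,A_{t,+}.
\]
Then $h_1=2\alpha a/(e^t\mathbf{Var}(X))$ is the positive zero of $f_1$ (with $f_1<0$ on $(0,h_1)$), while $h_2=y^{-1}\log(1+\beta a y^{t-1}/A_{t,+})$ is the minimiser of the convex function $f_2$ (so $f_2<0$ on $(0,h_2]$). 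The condition in (i) is exactly $h_1\le h_2$, whence $f_1(h_1)+f_2(h_1)\le 0$; the condition in (ii) is $h_2\le h_1$, whence $f_1(h_2)+f_2(h_2)\le 0$. In either case the chosen $h$ satisfies $\mathbf{E}[e^{hX},X\le y]\le 1$. With your placement of $\alpha,\beta$, plugging in $h=h_1$ leaves a residual positive term $\beta A_{t,+}(e^{h_1y}-1-h_1y)/y^t$ that is not dominated by $(1-\alpha^2)h_1a$ merely from $h_1\le h_2$, so the case analysis breaks down.

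A second, smaller issue: the passage from $\mathbf{E}[e^{hX},X\le y]\le 1$ to the final estimate does not require two different mechanisms for (i) and (ii). The paper's Lemma~1 (optional stopping for $W_k=\prod_{i\le k}e^{hX_i}\mathbf{1}_{\{X_i\le y\}}$ at $T=\min\{\sigma_y,t_x,\tau\}$, together with $\mathbf{E}[e^{hS_\tau}]\ge 1+h\mathbf{E}[S_\tau]=1-ha\mathbf{E}[\tau]$) gives uniformly
\[
\mathbf{P}(M_\tau>x)\le\frac{ah\,\mathbf{E}[\tau]}{e^{hx}-1}+\Bigl(1+\frac{1}{e^{hx}-1}\Bigr)\mathbf{E}[\tau]\,\mathbf{P}(X>y),
\]
and then one simply substitutes $h=h_1$ for (i) and $h=h_2$ (with $(1+u)^{x/y}\ge u^{x/y}$) for (ii). The Wald-identity route you sketch for case (i) relies on ``$1-\phi(h)\ge 1-e^{-c}$'', i.e.\ $\phi(h)\le e^{-h}$, which is stronger than $\phi(h)\le 1$ and is not what the moment bound delivers.
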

\begin{remark}\label{R2}
Analogously to Theorem \ref{T1} one can replace $\mathbf{Var}(X)$ and $A_{t,+}$ by the corresponding truncated expectations 
$B^2(-\infty,x)=\mathbf{E}[X^2,X\le y]$ and $A_{t,+}(y)=\mathbf{E}[X^t,X\in(0,y]]$ respectively in Theorem \ref{T2}.
\hfill$\diamond$
\end{remark}
\begin{corollary}
\label{C1}
Assume that $\mathbf{P}(|X|>x)=L(x)x^{-r}$ for some $r>1$ and 
$$
\mathbf{P}(X>x))/\mathbf{P}(|X|>x)\to p\in(0,1)\quad\text{ as }x\to\infty.
$$
Then, it follows from (\ref{T1.1}) and (\ref{T2.2}) that
$$
\limsup_{x\to\infty}\frac{\mathbf{P}\left(M_\tau>x\right)}{\mathbf{P}(X>x)}\leq\mathbf{E}[\tau_z]
$$
for every $z>0$.
\end{corollary}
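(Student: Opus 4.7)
The plan is to apply inequality (\ref{T1.1}) when $r\in(1,2]$ and inequality (\ref{T2.2}) when $r>2$, in each case choosing the truncation level to be $y=cx$ for a constant $c\in(0,1)$ and an auxiliary parameter $t$ slightly below $r$. Regular variation of index $-r$ gives $A_t=\mathbf{E}[|X|^t]<\infty$ and $A_{t,+}<\infty$ for every $t<r$, so admissible values of $t$ exist in each regime (namely $t\in(1,r)\cap(1,2]$ and $t\in(2,r)$, respectively). For $y=cx$ the hypothesis $y^{t-1}\ge(e-1)A_t/a$ of Theorem \ref{T1} and the condition (\ref{T2.2.1}) of Theorem \ref{T2} are both satisfied once $x$ is large enough, since their right-hand sides are either fixed constants or decay like $(\log y)/y$.

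With $y=cx$ the ratio $x/y=1/c$ is constant, greatly simplifying both inequalities. In (\ref{T1.1}) the first term becomes a constant multiple of $x^{-1-(t-1)/c}\log x$, the $\log$ factor arising from $\log(1+ay^{t-1}/A_t)\sim (t-1)\log x$ and the power of $x$ from $(cx)^{-1-(t-1)/c}$. Dividing by $\mathbf{P}(X>x)\sim pL(x)x^{-r}$ and using that $(\log x)/L(x)$ grows only subpolynomially by slow variation, the quotient vanishes as $x\to\infty$ precisely when $r-1-(t-1)/c<0$, that is when $c<(t-1)/(r-1)$. The first term of (\ref{T2.2}) admits the same analysis after substituting $A_{t,+}/\beta$ for $A_t$ and yields the identical condition on $c$.

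For the second term of (\ref{T1.1}), the prefactor $1+(A_t/a)^{x/y}y^{-(t-1)x/y}$ equals $1+(A_t/a)^{1/c}(cx)^{-(t-1)/c}$, which tends to $1$ since the second summand carries a negative power of $x$. By regular variation, $\mathbf{P}(X>cx)/\mathbf{P}(X>x)\to c^{-r}$, so the second term contributes $(1+o(1))c^{-r}\mathbf{E}[\tau_z]\mathbf{P}(X>x)$; the corresponding term in (\ref{T2.2}) behaves in the same way. Consequently, for every admissible pair $(t,c)$,
\[
\limsup_{x\to\infty}\frac{\mathbf{P}(M_\tau>x)}{\mathbf{P}(X>x)}\le c^{-r}\mathbf{E}[\tau_z].
\]

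It remains to optimize over $(t,c)$. For any $\varepsilon>0$, I would first choose $c<1$ close enough to $1$ that $c^{-r}<1+\varepsilon$, then select $t<r$ close enough to $r$ that $(t-1)/(r-1)>c$, producing an admissible pair with bound $(1+\varepsilon)\mathbf{E}[\tau_z]$; letting $\varepsilon\downarrow 0$ completes the proof. The main point requiring care is the strict inequality $c<(t-1)/(r-1)$: one needs a strictly positive polynomial gain in $x$ to beat the slowly varying factor $(\log x)/L(x)$ in the first term, which is precisely why one cannot set $c=1$ directly and read off $\mathbf{E}[\tau_z]$ without the limiting argument.
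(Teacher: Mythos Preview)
Your proof is correct and follows essentially the approach the paper intends: the corollary is stated without a detailed proof, but the very same device---setting $y=\theta x$ for fixed $\theta\in(0,1)$, showing that the first (``exponential'') term of (\ref{T1.1}) or (\ref{T2.2}) is $o(\mathbf{P}(X>x))$ while the second term contributes $(1+o(1))\theta^{-r}\mathbf{E}[\tau_z]\mathbf{P}(X>x)$, and finally letting $\theta\to1$ with $t\uparrow r$---is exactly what the authors carry out in the proofs of Theorems~\ref{T3}, \ref{T4} and \ref{T5}. Your identification of the constraint $c<(t-1)/(r-1)$ and the two-step optimisation (first $c$, then $t$) is the right way to make this precise.
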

But it follows from the results of Asmussen \cite{A98} (see also Denisov \cite{D05}), that
$$
\lim_{x\to\infty}\frac{\mathbf{P}\left(M_\tau>x\right)}{\mathbf{P}(X>x)}=\mathbf{E}[\tau_z]
$$ 
under the condition that the tail of $F$ is regularly varying. This means that the inequalities 
(\ref{T1.1}) and (\ref{T2.2}) are asymptotically precise in the case of regularly varying tails.

In all these inequalities we have $\mathbf{E}[\tau_z]$ on the right hand side. It is really hard to get an exact expression 
for this value via initial data, but there are good upper bounds in the literature: 
Since $\mathbf{E}[\tau_z]< \infty$ (see, for example, Feller \cite{F71}) by Wald's Identity,
\begin{equation}
\label{tau}
\mathbf{E}[\tau_z] = \frac{z+\mathbf{E}[R_z]}{a},
\end{equation}
where $R_z = -z-S_{\tau_z}$ denotes the overshoot in $\tau_z$. 
Hence, we get upper bounds for $\mathbf{E}[\tau_z]$ by the inequality of Lorden \cite{L70}: 
For $\mathbf{E}[X]<0$ and $\mathbf{E}[(X^-)^2]< \infty$,
\begin{equation}
\label{lorden}
\mathbf{E}[R_z]\le \frac{\mathbf{E}[(X^-)^2]}{a}
\end{equation}
and the one from Mogul'skii \cite{M73}: For $\mathbf{E}[X]\le 0$ and $\mathbf{E}[|X|^3]< \infty$,
\begin{equation}
\label{mogulskii}
\mathbf{E}[R_z]\le A \frac{3}{2} \frac{\mathbf{E}[|X|^3]}{\mathbf{E}[X^2]},
\end{equation}
where $A$ is a certain constant, $A \le 2$.
The disadvantage of these bounds is, that we have to assume the existence of the second or even the third moment.
We give another bound, which only requires the finiteness of the moment of order $t$, $t \in (1,2]$.
\begin{proposition}
\label{P1}
Assume that $A_{t,-}:=\mathbf{E}[(X^-)^t]<\infty$ for some $t\in(1,2]$, then, for every $z>0$,
\begin{equation}
\label{P1.1}
\mathbf{E}[R_z]
\le \frac{t^{t/(t-1)}A_{t,-}^{1/(t-1)}}{(t-1)a^{t/(t-1)}}
\left(\mathbf{E}[-X,X<0]+\frac{z^{2-t}}{t} A_{t,-}\right).
\end{equation}
\end{proposition}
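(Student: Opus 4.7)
The plan starts from the pointwise inequality $R_z \le X_{\tau_z}^-$: since $z>0$ forces $\tau_z \ge 1$ and $S_{\tau_z-1}>-z$ on $\{\tau_z\ge1\}$, we may write
\[R_z = (-X_{\tau_z}) - (z+S_{\tau_z-1}) \le X_{\tau_z}^-,\]
reducing the problem to bounding $\mathbf{E}[X_{\tau_z}^-]$.

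Next, since $\{\tau_z \ge n\}$ is $\mathcal{F}_{n-1}$-measurable and $X_n$ is independent of $\mathcal{F}_{n-1}$,
\[\mathbf{E}[X_{\tau_z}^-] = \sum_{n\ge1} \mathbf{E}\bigl[\mathbf{1}_{\tau_z\ge n}\,g(z+S_{n-1})\bigr], \quad g(w):=\mathbf{E}\bigl[-X\,\mathbf{1}_{X\le-w}\bigr].\]
The function $g$ admits two elementary bounds: trivially $g(w)\le \mathbf{E}[-X;X<0]$ and, by Markov's inequality, $g(w)\le A_{t,-}/w^{t-1}$. Combining these through a Young-type split with conjugate exponents $t$ and $t/(t-1)$ yields an estimate of the form
\[\mathbf{E}[R_z] \le C_1\,\mathbf{E}[-X;X<0]\,\mathbf{E}[\tau_z] + C_2\,A_{t,-}\,\mathbf{E}\!\left[\sum_{k=0}^{\tau_z-1}(z+S_k)^{-(t-1)}\right]\]
with explicit $t$-dependent constants $C_1, C_2$. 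Substituting $\mathbf{E}[\tau_z] = (z+\mathbf{E}[R_z])/a$ from (\ref{tau}) and estimating the inner sum via the concave test function $\Psi(v)=v^{2-t}/((2-t)a)$ --- whose one-step increment satisfies $\mathbf{E}[\Psi(v+X)] - \Psi(v) \approx -v^{1-t}$ in view of the drift $\mathbf{E}[X]=-a$ --- produces the $z^{2-t}A_{t,-}/t$ contribution in the final bound.

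The hardest step is the rigorous control of $\mathbf{E}\!\left[\sum_{k<\tau_z}(z+S_k)^{-(t-1)}\right]$ under only the assumption that the $t$-th moment of $X^-$ is finite: the concave-test-function heuristic is clean when $v+X\ge0$, but needs a correction involving $g(v)$ itself whenever $v+X<0$, and that correction must then be absorbed back into the Young-type optimization of the previous step. Balancing these contributions, together with case handling for the singular behaviour of $v^{2-t}$ as $t\uparrow 2$, is what forces the somewhat peculiar constant $t^{t/(t-1)}/(t-1)$ that appears in the statement.
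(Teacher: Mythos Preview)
Your route is entirely different from the paper's and has genuine gaps. The paper does not start from the pointwise bound $R_z\le X_{\tau_z}^-$ at all; it simply invokes Theorem~2.1 of Borovkov--Foss \cite{BF98}, which already yields
\[
\mathbf{E}[R_z]\le c\int_0^\infty\mathbf{P}(-X>u)\,du
+c\int_0^\infty\!\!\int_u^{u+z}\mathbf{P}(-X>v)\,dv\,du,
\qquad c=\frac{b^*(\epsilon a)}{a(1-\epsilon)},
\]
with $b^*(u)=\min\{v:-\mathbf{E}[X;X<-v]\le u\}$. The first integral is $\mathbf{E}[-X;X<0]$; the second is bounded by $z^{2-t}A_{t,-}/t$ after changing the order of integration and applying Markov; and $b^*(u)\le (A_{t,-}/u)^{1/(t-1)}$. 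The ``peculiar constant'' $t^{t/(t-1)}/(t-1)$ then drops out of a one-variable minimisation of $\epsilon^{-1/(t-1)}/(1-\epsilon)$ over $\epsilon\in(0,1)$, attained at $\epsilon=1/t$. It is not the outcome of a delicate balance of correction terms, as you speculate.

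In your plan, the decomposition $\mathbf{E}[X_{\tau_z}^-]=\sum_{n\ge1}\mathbf{E}[\mathbf{1}_{\tau_z\ge n}\,g(z+S_{n-1})]$ is fine, but from that point on the argument is only a heuristic. The concave test function $\Psi(v)=v^{2-t}/((2-t)a)$ is undefined at the terminal time because $z+S_{\tau_z}\le 0$; any extension of $\Psi$ to the negative half-line that preserves the drift inequality introduces a correction of exactly the overshoot type you are trying to bound, so the step is circular. The same circularity appears when you substitute $\mathbf{E}[\tau_z]=(z+\mathbf{E}[R_z])/a$: you end up with $\mathbf{E}[R_z]$ on both sides, and you have not shown that the resulting implicit inequality closes. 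Your ``Young-type split with conjugate exponents $t$ and $t/(t-1)$'' is also unspecified---Young's inequality bounds a product, whereas you have a \emph{minimum} of two estimates for $g$; what product you intend to split, and why it would produce the constants $C_1,C_2$ that lead exactly to $t^{t/(t-1)}/(t-1)$, is never made concrete. As written, this is a sketch of a possible alternative, not a proof, and there is no evidence it recovers the stated constant.
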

Combining (\ref{tau}) with (\ref{lorden}), (\ref{mogulskii}) or (\ref{P1.1}) we obtain upper bounds for $\mathbf{E}[\tau_z]$.
Plugging these bounds into the inequalities in Theorems \ref{T1} and \ref{T2} we get bounds for $\mathbf{P}(M_\tau>x)$, which contain information on $X$ only.
So, they can be used for concrete calculations.

We now come back to the global maximum.
\begin{theorem}
\label{T3} 
Fix some $\theta\in(0,1)$ and define 
$$
c_1 := \frac{3 A_t^{1/\theta}\theta^{-(t-1)/\theta}}{(t-1)a^{1/\theta-1}}, \quad
c_2 := \frac{3 A_{t,+}^{1/\theta}\theta^{-(t-1)/\theta}}{(t-1)a^{1/\theta-1}}.
$$
\begin{itemize}
\item[(i)] Assume that $A_t<\infty$ for some $t\in(1,2]$. Then, for every 
$x$ satisfying $x^{t-1}\ge\theta^{1-t}(e-1)A_t a^{-1}$ and $x\ge z(t-1)\theta^{-1}$, we have
\begin{align}\label{T3.1}
\nonumber
\mathbf{P}(M>x)&\leq 
c_1 \frac{\mathbf{E}[\tau_z]}{z}\log\left(1+\frac{a\theta^{t-1}x^{t-1}}{A_t}\right) x^{-(t-1)/\theta}
\\
&\hspace{0.3cm}+ \left(1+\left(\frac{A_t}{\theta^{t-1}ax^{t-1}}\right)^{1/\theta}\right)\mathbf{E}[\tau_z]
\left(\frac{1}{\theta z}G(\theta x)+\mathbf{P}(X>\theta x)\right).
\end{align}
\item[(ii)] Assume that $\mathbf{Var}(X)<\infty$ and $A_{t,+}<\infty$ for some $t>2$.
Then, for every $x$ satisfying (\ref{T2.2.1}) for $y=\theta x$, $x^{t-1}\ge\theta^{1-t}(e^{\theta}-1)A_{t,+} \beta^{-1}a^{-1}$ and $x\ge z(t-1)\theta^{-1}$, we have
\begin{align}\label{T3.2}
\nonumber
\mathbf{P}(M>x)&\leq c_2 \beta^{-1/\theta} \frac{\mathbf{E}[\tau_z]}{z}\log\left(1+\frac{\beta a\theta^{t-1}x^{t-1}}{A_{t,+}}\right) x^{-(t-1)/\theta}\\
&\hspace{0.3cm}+ \left(1+\left(\frac{A_{t,+}}{\beta\theta^{t-1}ax^{t-1}}\right)^{1/\theta}\right)\mathbf{E}[\tau_z]
\left(\frac{1}{\theta z}G(\theta x)+\mathbf{P}(X>\theta x)\right).
\end{align}
\end{itemize}
\end{theorem}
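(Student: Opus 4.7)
The plan is to combine the Lebesgue-style decomposition (\ref{eq1.1}) with the per-term bounds of Theorem \ref{T1} (for part (i)) and Theorem \ref{T2}(ii) (for part (ii)), applied with a truncation level tailored to the index of summation. Specifically, in each summand $\mathbf{P}(M_\tau > x+jz)$ on the right of (\ref{eq1.1}) I would choose $y_j := \theta(x+jz)$. The decisive payoff of this choice is that the ratio $(x+jz)/y_j = 1/\theta$ is independent of $j$, so the combinations $A_t^{(x+jz)/y_j}/a^{(x+jz)/y_j-1}$ and $(t-1)(x+jz)/y_j$ appearing in (\ref{T1.1}) become constants and can be pulled outside the sum. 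The hypothesis $x^{t-1}\ge\theta^{1-t}(e-1)A_t/a$ gives $y_0^{t-1}\ge(e-1)A_t/a$, which is exactly the standing assumption of Theorem \ref{T1}; since $y_j\ge y_0$, that assumption then holds for every $j$.

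After this substitution the sum splits into a \emph{main part} and a \emph{tail part}. The main part is proportional to
$$
\sum_{j=0}^\infty (x+jz)^{-1-(t-1)/\theta}\log\!\left(1+c(x+jz)^{t-1}\right),\qquad c=a\theta^{t-1}/A_t.
$$
The summand $f(u):=u^{-1-(t-1)/\theta}\log(1+cu^{t-1})$ is decreasing on $[x,\infty)$ (a routine logarithmic-derivative check using $\theta<1$), so
$$
\sum_{j=0}^\infty f(x+jz)\le f(x)+\frac{1}{z}\int_x^\infty f(u)\,du.
$$
I evaluate the integral by the substitution $u=xv$ and the bound $\log(1+AB)\le\log(1+A)+\log B$ for $B\ge 1$, combined with the elementary identities $\int_1^\infty v^{-1-b}\,dv=1/b$ and $\int_1^\infty v^{-1-b}\log v\,dv=1/b^2$ at $b=(t-1)/\theta$. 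This yields an upper bound of the form $x^{-(t-1)/\theta}\bigl(\tfrac{\theta}{t-1}\log(1+cx^{t-1})+\tfrac{\theta^2}{t-1}\bigr)$. The hypothesis on $x$ forces $\log(1+cx^{t-1})\ge 1$, which absorbs the $\theta^2/(t-1)$ into the log term, and the hypothesis $x\ge z(t-1)/\theta$ absorbs the boundary contribution $f(x)\sim x^{-1}\cdot x^{-(t-1)/\theta}\log(\ldots)$ into the integral term. Adding up, the bracket collapses into the factor $3\theta/(z(t-1))$, and multiplying through by the constant prefactor reproduces $c_1$ exactly.

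For the tail part, I observe that the coefficient $1+(A_t/(ay_j^{t-1}))^{1/\theta}$ is decreasing in $j$ and hence bounded by its $j=0$ value, which I pull out of the sum. The remaining sum $\sum_{j\ge 0}\mathbf{P}(X>\theta(x+jz))$ is controlled by the standard integral comparison
$$
\sum_{j=0}^\infty \mathbf{P}(X>\theta(x+jz))\le\mathbf{P}(X>\theta x)+\frac{1}{\theta z}\int_{\theta x}^\infty\overline{F}(u)\,du=\mathbf{P}(X>\theta x)+\frac{G(\theta x)}{\theta z},
$$
which gives the bracketed factor in (\ref{T3.1}). Part (ii) proceeds identically, invoking Theorem \ref{T2}(ii) in place of Theorem \ref{T1} and replacing $A_t,(e-1)$ by $A_{t,+}/\beta,(e^\theta-1)$ throughout; the stronger hypothesis $x^{t-1}\ge\theta^{1-t}(e^\theta-1)A_{t,+}/(\beta a)$ now delivers $\log(1+c'x^{t-1})\ge\theta$, which is the precise amount needed to absorb the $\theta^2/(t-1)$ term in the case $t>2$. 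A minor technical point is that condition (\ref{T2.2.1}) must be checked at every $y_j$, not only at $y_0=\theta x$; this follows because $y\mapsto y^{-1}\log(1+\beta ay^{t-1}/A_{t,+})$ is decreasing for $y\ge e$, which is the relevant range here.

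The main obstacle is the exact bookkeeping that matches the advertised constants $c_1$ and $c_2$: three contributions have to combine into a single factor~$3$, and each of the two numerical hypotheses on $x$ is used precisely once (one to bound the excess integral constant by the log, the other to bound the boundary term by the integral). Choosing the index-dependent truncation $y_j=\theta(x+jz)$ is what makes this accounting feasible; a fixed truncation $y=\theta x$ would produce a divergent tail sum $\sum_j\mathbf{P}(X>\theta x)$.
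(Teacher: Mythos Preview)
Your proposal is correct and follows essentially the same route as the paper: decompose via (\ref{eq1.1}), apply Theorem~\ref{T1} (resp.\ Theorem~\ref{T2}(ii)) termwise with the index-dependent truncation $y_j=\theta(x+jz)$, bound the resulting monotone sums by integrals, and assemble the factor~$3$ from the boundary term plus the two pieces of the integral using the two hypotheses on $x$. The only cosmetic difference is that the paper estimates $\int_x^\infty u^{-1-(t-1)/\theta}\log(1+cu^{t-1})\,du$ via integration by parts whereas you use the substitution $u=xv$ together with $\log(1+AB)\le\log(1+A)+\log B$; both yield the identical bound $x^{-(t-1)/\theta}\bigl(\tfrac{\theta}{t-1}\log(1+cx^{t-1})+\tfrac{\theta^2}{t-1}\bigr)$.
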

\begin{corollary}
 If the assumptions of Corollary \ref{C1} hold, then it follows from Theorem \ref{T3} that
 $$
  \limsup_{x\rightarrow \infty} \frac{\mathbf{P}(M>x)}{G(x)}\le \frac{\mathbf{E}[\tau_z]}{z}\theta^{-r}.
 $$
\end{corollary}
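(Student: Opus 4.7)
The plan is to apply Theorem \ref{T3} for large $x$ with a well-chosen $t$, divide through by $G(x)$, and take $\limsup$ using standard Karamata asymptotics for regularly varying tails.

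First I would record the consequences of regular variation. Writing $\overline{F}(x)\sim p L(x)x^{-r}$, Karamata's theorem yields $G(x)\sim \frac{p}{r-1}L(x)x^{1-r}$, and slow variation of $L$ then gives $G(\theta x)/G(x)\to \theta^{1-r}$ and $\overline{F}(\theta x)/G(x)\to 0$. Hence the bracket $\tfrac{1}{\theta z}G(\theta x)+\overline{F}(\theta x)$ appearing in (\ref{T3.1})--(\ref{T3.2}) contributes exactly $\theta^{-r}/z$ after division by $G(x)$, which is the target constant.

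Next I would choose $t$ subject to two competing constraints: $t<r$, so that the relevant moment ($A_t$ or $A_{t,+}$) is finite, and $t>1+\theta(r-1)$, so that the first term on the right-hand side of (\ref{T3.1}) or (\ref{T3.2})---which is of order $(\log x)\,x^{-(t-1)/\theta}$---is $o(G(x))\asymp L(x)x^{1-r}$. Since $\theta<1$, the interval $(1+\theta(r-1),r)$ is non-empty. For $r\in(1,2]$ I can place $t$ there inside $(1,2]$ and apply Theorem \ref{T3}(i); for $r>2$ I take $t\in(\max(2,1+\theta(r-1)),r)$ and apply Theorem \ref{T3}(ii), which is legitimate since $\mathbf{Var}(X)<\infty$ whenever $r>2$, and since condition (\ref{T2.2.1}) at $y=\theta x$ holds for large $x$ (its right-hand side is $\sim (t-1)\log(\theta x)/(\theta x)\to 0$, while its left-hand side is a positive constant). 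The side conditions $x^{t-1}\ge \theta^{1-t}(e-1)A_t a^{-1}$ and $x\ge z(t-1)\theta^{-1}$ are trivially satisfied for all sufficiently large $x$.

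Finally I would divide (\ref{T3.1}) or (\ref{T3.2}) by $G(x)$ and let $x\to\infty$: the prefactor $1+(A_t/(\theta^{t-1}ax^{t-1}))^{1/\theta}$ (resp.\ with $A_{t,+}$) tends to $1$, the first term vanishes by the choice of $t$, and the Karamata calculation above leaves precisely $\mathbf{E}[\tau_z]\theta^{-r}/z$. The only real source of friction is coordinating the two bounds $t<r$ and $t>1+\theta(r-1)$ across the two moment regimes, but this reduces to the inequality $1+\theta(r-1)<r$, i.e.\ to $\theta<1$, so no genuine technical obstacle arises.
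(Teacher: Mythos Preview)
Your proposal is correct and is precisely the argument the paper has in mind: the corollary is stated without an explicit proof, simply as ``it follows from Theorem~\ref{T3}'', and your write-up supplies exactly the routine Karamata bookkeeping needed to pass from (\ref{T3.1})--(\ref{T3.2}) to the claimed limit. The only comment is that the paper, when it later carries out the analogous computation in the proofs of Theorems~\ref{T4} and~\ref{T5}, makes the same two observations you use---namely that the logarithmic first term is $o(G(x))$ once $(t-1)/\theta>r-1$, and that the $G(\theta x)$ term dominates the $\mathbf{P}(X>\theta x)$ term---so your argument matches the paper's implicit reasoning essentially line for line.
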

Since the left-hand side does not depend on $\theta$ and $z$, we can let $\theta \rightarrow 1$ and $z \rightarrow \infty$.
Noting that each of (\ref{lorden}) and (\ref{mogulskii}) combined with (\ref{tau}) yields
$$
 \frac{\mathbf{E}[\tau_z]}{z} \rightarrow \frac{1}{a} \quad \text{as } z\rightarrow \infty ,
$$
we conclude 
$$
  \limsup_{x\rightarrow \infty} \frac{\mathbf{P}(M>x)}{G(x)}\le \frac{1}{a}.
$$
Comparing this with (\ref{Veraverbeke}), we see that the inequalities in Theorem \ref{T3} are asymptotically precise. 
This even remains valid, if we bound $\mathbf{E}[\tau_z]$ in these inequalities by combining (\ref{lorden}) or (\ref{mogulskii}) with (\ref{tau}).

The reason why we are able to obtain asymptotically precise bounds is, that we may choose $z$ arbitrary large.
That possibility seems to be a quite important advantage of our method compared to geometric sums.  If the distribution
of $\chi^+_1$ is subexponential, then it follows easily from (\ref{GS}) that
$$
\mathbf{P}(M > x)\sim\left(\frac{1}{q}-1\right)\mathbf{P}(\chi^+_1 > x)\quad\text{as }x\to\infty.
$$
Therefore, in order to obtain an upper bound for the maximum we need to control the quantity $1/q$. It is well known that
$1/q=\mathbf{E}[-S_{\tau_0}]=\mathbf{E}[R_0].$ Thus, we may apply (\ref{lorden}), (\ref{mogulskii}) 
or (\ref{P1.1}) with $z=0$. But corresponding inequalities for $M$ will not be asymptotically precise. Summarising, the approach via geometric sums can only lead to asymptotically precise bounds if $q$ is known.

We next test our inequalities in the heavy-traffic regime. Let $\{S^{(a)},a\geq0\}$ be a family of random walks with $\mathbf{E}[X^{(a)}]=-a$.
We shall assume that $X^{(a)}=X^{(0)}-a$ for all $a>0$. Let $M^{(a)}$ denote the corresponding maximum. It is known that if
$X^{(0)}$  belongs to the domain of attraction of a stable law, then there exists a regularly varying function 
$g(a)$ such that $g(a)M^{(a)}$ converges weakly as $a\to0$. It turns out that our inequalities may be applied to large deviation problems in
the heavy-traffic convergence mentioned above. More precisely, they give asymptotically precise bounds for the probabilities
$\mathbf{P}(M^{(a)}>x_a)$ if $x_a\gg 1/g(a)$.
In the case of $\sigma^2:=\mathbf{Var}(X^{(0)})$ being finite, one has $g(a)=a$ and the weak limit of $aM^{(a)}$ 
is the exponential distribution with parameter $2/\sigma^2$.
\begin{theorem}
\label{T4}
Assume that $\sigma^2<\infty$ and the right tail of the distribution function of $X^{(0)}$ is regularly varying with index $r>2$,
that is, $\mathbf{P}(X^{(0)}>u)=u^{-r}L(u)$, where $L$ is slowly varying.
 If
 \begin{equation}
  \label{T4.1}
  \liminf_{a\rightarrow 0} \frac{x_a}{a^{-1}\log{a^{-1}}} > e^r\frac{(r-2)}{2}\sigma^2,
 \end{equation}
 then
 \begin{equation}
  \label{T4.2}
  \mathbf{P}(M^{(a)}>x_a) \sim \frac{x_a^{-r+1}L(x_a)}{(r-1)a} \quad \text{as } a\rightarrow 0.
 \end{equation}
\end{theorem}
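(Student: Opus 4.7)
The right-hand side of (\ref{T4.2}) equals $G(x_a)/a$ by Karamata's theorem, since $G(x)=\int_x^\infty\overline{F}(u)\,du\sim x^{-(r-1)}L(x)/(r-1)$. So the claim reduces to the matching asymptotic bounds $\limsup_{a\to 0}\,a\mathbf{P}(M^{(a)}>x_a)/G(x_a)\le 1$ and $\liminf_{a\to 0}\,a\mathbf{P}(M^{(a)}>x_a)/G(x_a)\ge 1$. I extract the upper bound from Theorem \ref{T3}(ii); the lower bound follows from a standard one-big-jump construction in the heavy-tailed heavy-traffic regime.

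For the upper bound, fix $t\in(2,r)$ and $\alpha,\theta\in(0,1)$, each intended to be sent to $r,1,1$ in a final parameter limit, and take $z=z_a\to\infty$ slowly (with $z_a\ll x_a$ and $az_a\to\infty$). One verifies the three hypotheses of Theorem \ref{T3}(ii). The two simple ones, $x_a^{t-1}\ge\theta^{1-t}(e^\theta-1)A_{t,+}\beta^{-1}a^{-1}$ and $x_a\ge z_a(t-1)/\theta$, follow at once from (\ref{T4.1}), which forces $ax_a\gtrsim\log a^{-1}$. The substantive hypothesis is (\ref{T2.2.1}) at $y=\theta x_a$: after expanding the logarithm and using $\log(\theta x_a)\sim\log a^{-1}$, it asymptotically reduces to
$$\frac{ax_a}{\log a^{-1}}\ge\frac{e^t(t-2)\sigma^2}{2\alpha\theta}.$$
Hypothesis (\ref{T4.1}) delivers precisely the strict inequality needed, provided $t,\alpha,\theta$ are chosen close enough to $r,1,1$.

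With Theorem \ref{T3}(ii) applicable, I analyse the two summands of (\ref{T3.2}). Using (\ref{tau})--(\ref{lorden}) together with $z_a\to\infty$ and $az_a\to\infty$, one has $\mathbf{E}[\tau_z]/z\to 1/a$; the correction prefactor $1+(A_{t,+}/(\beta\theta^{t-1}ax_a^{t-1}))^{1/\theta}\to 1$ since $t>2$ ensures $ax_a^{t-1}\to\infty$; by regular variation $G(\theta x_a)\sim\theta^{-(r-1)}G(x_a)$ and $\overline{F}(\theta x_a)\ll G(\theta x_a)/z_a$. Hence the second summand contributes $(1+o(1))\theta^{-r}G(x_a)/a$. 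The first summand is of order $a^{-1/\theta}\log x_a\cdot x_a^{-(t-1)/\theta}$; after substituting $a\sim(\log x_a)/x_a$ from (\ref{T4.1}), its ratio with $G(x_a)/a$ scales like $x_a^{(r-2)-(t-2)/\theta}$ up to slowly varying factors, and so tends to $0$ provided $\theta<(t-2)/(r-2)$, a constraint compatible with $\theta$ arbitrarily close to $1$ once $t$ is close to $r$. Sending $\theta\to 1$ after the upper bound is taken gives $\limsup a\mathbf{P}(M^{(a)}>x_a)/G(x_a)\le 1$.

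For the matching lower bound I would exhibit a one-big-jump event. Fix $K>0$ large and a horizon $N_a$ with $N_a a\to\infty$. For $k\le N_a$ set $E_k=\{S_{k-1}^{(a)}>-K,\ X_k^{(0)}>x_a+ka+K\}$; on $E_k$ one has $S_k^{(a)}>x_a$ and so $M^{(a)}>x_a$. Disjointifying by the first index at which $E_k$ occurs and using independence, $\sum_k\mathbf{P}(E_k)\gtrsim\mathbf{P}(\inf_{j\ge 0}S_j^{(a)}>-K)\sum_{k\le N_a}\overline{F}(x_a+ka+K)$; the sum approximates $a^{-1}\int_{x_a+K}^{x_a+N_aa+K}\overline{F}(v)\,dv$, and sending first $K\to\infty$ and then $N_a a\to\infty$ yields $(1-o(1))G(x_a)/a$, matching the upper bound. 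The main obstacle is the upper-bound step: the three parameters $(t,\alpha,\theta)$ each contribute independent slack, the threshold $e^r(r-2)\sigma^2/2$ in (\ref{T4.1}) is exactly what (\ref{T2.2.1}) requires to hold uniformly in the heavy-traffic limit, and the limits $t\to r$, $\alpha,\theta\to 1$ must be taken after the bound is obtained so that every source of slack absorbs into the ultimate asymptotic $1$.
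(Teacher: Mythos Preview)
Your upper-bound argument mirrors the paper's: apply Theorem~\ref{T3}(ii) with $t\in(2,r)$, verify (\ref{T2.2.1}) at $y=\theta x_a$ by reducing it to $ax_a/\log a^{-1}\gtrsim e^t(t-2)\sigma^2/(2\alpha\theta)$, take $z_a$ with $a^{-1}\ll z_a\ll x_a$ so that $\mathbf{E}[\tau_{z_a}]/z_a\sim a^{-1}$ via (\ref{tau}) and (\ref{lorden}), show the first summand of (\ref{T3.2}) is negligible when $\theta<(t-2)/(r-2)$, and finally let $t\uparrow r$ (hence $\theta\uparrow 1$). The paper takes $\theta=(t-2)/(r-2)$ exactly rather than with strict inequality; this is an inessential variant. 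One quibble: you write ``substituting $a\sim(\log x_a)/x_a$ from (\ref{T4.1})'', but (\ref{T4.1}) is only a one-sided bound on $x_a$; what you actually need and use is the inequality $a\ge c(\log a^{-1})/x_a$, which goes the right way because the exponent $1-1/\theta$ on $a$ is negative.

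Your lower-bound sketch, by contrast, contains a genuine gap. Because $S_n^{(a)}\to-\infty$ almost surely under negative drift, $\mathbf{P}\bigl(\inf_{j\ge 0}S_j^{(a)}>-K\bigr)=0$ for every finite $K$, so your displayed lower bound is identically zero and no subsequent $K\to\infty$ limit recovers anything. The same failure occurs if the infimum is restricted to $j<N_a$: with $N_a a\to\infty$ the walk has drifted well below any fixed $-K$ by time $N_a$ with high probability. A workable one-big-jump argument must let the threshold on $S_{k-1}^{(a)}$ move with $k$---for instance, use $E_k=\{S_{k-1}^{(a)}>-(1+\epsilon)(k-1)a\}\cap\{X_k^{(0)}>x_a+(1+\epsilon)ka\}$, note that $\mathbf{P}\bigl(S_{k-1}^{(a)}>-(1+\epsilon)(k-1)a\bigr)=\mathbf{P}\bigl(S_{k-1}^{(0)}>-\epsilon(k-1)a\bigr)$ is close to $1$ over the relevant range of $k$, and send $\epsilon\downarrow 0$ at the end. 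The paper bypasses all of this by invoking the ready-made lower bound $\liminf_{a\to 0}a\,\mathbf{P}(M^{(a)}>x_a)/G(x_a)\ge 1$ from Foss--Korshunov--Zachary \cite{FKZ}, valid whenever $x_a\to\infty$.
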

Olvera-Cravioto, Blanchet and Glynn \cite{OBG} have shown that for an $M/G/1$ queue the relation (\ref{T4.2}) holds under the condition
$$
\liminf_{a\rightarrow 0} \frac{x_a}{a^{-1}\log{a^{-1}}} > \frac{(r-2)}{2}\sigma^2.
$$
We believe that the latter should be sufficient for the validity of (\ref{T4.2}) also in the general case. The extra factor
$e^r$ appears in (\ref{T4.1}) only as a consequence of the technique we used, and can be removed by adoption of (\ref{T3.2}) to the heavy-traffic setting.
\begin{theorem}
\label{T5}
 Assume that $\mathbf{E}[(\min\{0,X^{(0)}\})^2]<\infty$ and $\mathbf{P}(X^{(0)}>u)=u^{-r}L(u)$ with $r\in(1,2)$.
 If
 \begin{equation}
  \label{T5.1}
  \liminf_{a\rightarrow 0} g(a)x_a=\infty,
 \end{equation} 
 then
 \begin{equation}
  \label{T5.2}
  \mathbf{P}(M^{(a)}>x_a) \sim \frac{x_a^{-r+1}L(x_a)}{(r-1)a} \quad \text{as } a\rightarrow 0.
 \end{equation}
\end{theorem}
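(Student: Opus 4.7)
The plan is to prove (\ref{T5.2}) as matching asymptotic upper and lower bounds, both equal to $x_a^{-r+1}L(x_a)/(a(r-1))$. The upper bound will come from Theorem~\ref{T3}(i) tuned to the heavy-traffic regime; the lower bound from the standard one-big-jump argument adapted to the stopping-time decomposition of (\ref{eq1.1}).

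For the upper bound, fix $\theta\in(0,1)$, choose $z=z_a\to\infty$ with $z_a=o(x_a)$ and $az_a\to\infty$, and take $t=r$ together with the truncated-moment form of Theorem~\ref{T3}(i) inherited from Remark~\ref{R1}, so that $A_r(\theta x_a):=\mathbf{E}[|X^{(a)}|^r,|X^{(a)}|\le\theta x_a]$ replaces the infinite full moment $A_r$. Wald's identity (\ref{tau}) combined with Lorden's inequality (\ref{lorden})---applicable since $\mathbf{E}[(\min\{0,X^{(0)}\})^2]<\infty$---yields $\mathbf{E}[\tau_{z_a}^{(a)}]/z_a=(1+o(1))/a$. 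By Karamata's theorem applied to the regularly varying right tail, the ``main'' contribution in (\ref{T3.1}) satisfies
$$
(1+o(1))\,\mathbf{E}[\tau_{z_a}^{(a)}]\,\frac{G^{(a)}(\theta x_a)}{\theta z_a}\sim\frac{\theta^{-r}}{a(r-1)}\,x_a^{-r+1}L(x_a),
$$
while the $\mathbf{P}(X^{(a)}>\theta x_a)$ piece is of smaller order $x_a^{-r}L(x_a)$. Letting $a\to 0$ with $\theta$ fixed and then $\theta\to 1$ (the left side of (\ref{T5.2}) being independent of $\theta$) gives the upper bound, provided the residual (polynomial) term in (\ref{T3.1}) is negligible.

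The main obstacle is this last check. Up to slowly varying factors, the ratio residual/main is of order $A_r(\theta x_a)^{1/\theta}\cdot(ax_a^{r-1})^{-(1-\theta)/\theta}$. The stable-domain scaling for $r\in(1,2)$ satisfies $g(a)^{r-1}\sim aL(1/g(a))$, so condition (\ref{T5.1}) is equivalent (modulo slowly varying corrections) to $ax_a^{r-1}\to\infty$, making $(ax_a^{r-1})^{-(1-\theta)/\theta}$ decay polynomially in $1/a$. Meanwhile $A_r(\theta x_a)=r\int_0^{\theta x_a}v^{-1}L(v)\,dv+O(L(\theta x_a))$ is only slowly varying in $x_a$, hence dominated by the polynomial decay.

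For the lower bound, apply the stopping-time decomposition underlying (\ref{eq1.1}). Set $\tau^{(0)}:=0$ and iteratively let $\tau^{(i+1)}$ be $\tau_{z_a}$ applied to the post-$\tau^{(i)}$ walk; the block maxima $M_\tau^{(i)}:=\max_{\tau^{(i)}<k\le\tau^{(i+1)}}(S_k^{(a)}-S_{\tau^{(i)}}^{(a)})$ are i.i.d.\ copies of $M_{\tau_{z_a}}^{(a)}$. Since $S_{\tau^{(i)}}^{(a)}=-iz_a-\sum_{j\le i}R^{(j)}$ with $\mathbf{E}[R^{(j)}]\le C/a=o(z_a)$ (Lorden, combined with $az_a\to\infty$), one has
$$
\mathbf{P}(M^{(a)}>x_a)\ge\mathbf{P}\Bigl(\exists i:\,M_\tau^{(i)}>x_a+iz_a(1+o(1))\Bigr).
$$
Denisov's asymptotic \cite{D05} yields $\mathbf{P}(M_\tau^{(a)}>y)\sim\mathbf{E}[\tau_{z_a}^{(a)}]\mathbf{P}(X^{(a)}>y)$ as $y\to\infty$, so an inclusion-exclusion argument using the independence of the blocks gives
$$
\mathbf{P}(M^{(a)}>x_a)\ge(1+o(1))\,\mathbf{E}[\tau_{z_a}^{(a)}]\sum_{i\ge 0}\mathbf{P}(X^{(a)}>x_a+iz_a).
$$
Approximating the sum by a Riemann integral converts this into $\mathbf{E}[\tau_{z_a}^{(a)}]\,G^{(a)}(x_a)/z_a\sim G^{(a)}(x_a)/a\sim x_a^{-r+1}L(x_a)/(a(r-1))$, matching the upper bound.
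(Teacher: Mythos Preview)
Your upper bound has a genuine gap in the choice $t=r$. You claim that $(ax_a^{r-1})^{-(1-\theta)/\theta}$ ``decays polynomially in $1/a$'' and therefore dominates the slowly varying growth of $A_r(\theta x_a)^{1/\theta}$. But condition (\ref{T5.1}) is equivalent only to $ax_a^{r-1}/L(x_a)\to\infty$; nothing forces this quantity to grow like a power of $1/a$. Concretely, take $L\equiv 1$ so that $A_r(y)\sim r\log y$, and pick $x_a$ via $ax_a^{r-1}=(\log x_a)^2$ (which satisfies (\ref{T5.1})). A direct computation shows that the ratio of the first term in (\ref{T3.1}) to the ``main'' term carries the factor
\[
\bigl(A_r(\theta x_a)/L(x_a)\bigr)^{1/\theta}\,\bigl(ax_a^{r-1}/L(x_a)\bigr)^{-(1-\theta)/\theta}
\;\asymp\;(\log x_a)^{(2\theta-1)/\theta},
\]
which diverges for every $\theta>1/2$. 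Hence the residual term is \emph{not} negligible for $\theta$ close to $1$, and you cannot send $\theta\to 1$ to recover the sharp constant. The paper avoids this by taking $t=2$ (still admissible in Theorem~\ref{T1}) and using the truncated moment $A_2(y)\sim\frac{r}{2-r}y^{2-r}L(y)$: with this choice the slowly varying factors cancel exactly, and the ratio becomes $\bigl(ax_a^{r-1}/L(x_a)\bigr)^{-(1-\theta)/\theta}\log\bigl(ax_a^{r-1}/L(x_a)\bigr)\to 0$ for \emph{every} fixed $\theta\in(0,1)$. Note also that the paper does not invoke a ``truncated'' Theorem~\ref{T3}(i); it applies Theorem~\ref{T1} (in the form of Remark~\ref{R1}) with $y=\theta x_a$ to bound $\mathbf{P}(M_\tau^{(a)}>x_a)$, lets $\theta\to 1$, and only then sums via (\ref{eq1.1}).

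For the lower bound you work much harder than necessary, and introduce new gaps. Your block argument needs Denisov's asymptotic $\mathbf{P}(M_{\tau_z}^{(a)}>y)\sim\mathbf{E}[\tau_z^{(a)}]\mathbf{P}(X^{(a)}>y)$ to hold \emph{uniformly} as $a\to 0$ and $z=z_a\to\infty$, which is not what \cite{D05} provides; you also need uniform-in-$i$ control of the accumulated overshoots $\sum_{j\le i}R^{(j)}$, not just an expectation bound. The paper instead cites the general lower bound of Foss--Korshunov--Zachary (as in the proof of Theorem~\ref{T4}), which immediately gives $\liminf_{a\to 0}\mathbf{P}(M^{(a)}>x_a)/\bigl(a^{-1}G(x_a)\bigr)\ge 1$ for any $x_a\to\infty$, with no further work.
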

We have imposed the condition $\mathbf{E}[(\min\{0,X^{(0)}\})^2]<\infty$ just to use the Lorden inequality for the overshoot. 
If one replaces that condition by $\mathbf{E}[(\min\{0,X^{(0)}\})^t]<\infty$ with $t\in(1,2)$, then, using Proposition \ref{P1}, one
can show that (\ref{T5.2}) holds for $x_a\gg a^{-t/(t-1)^2}$ only. The reason is the roughness of Proposition \ref{P1} for small
values  of $a$. Indeed, if we use (\ref{P1.1}) even with $t=2$, then we get the bound $\mathbf{E}[R_z]\leq Ca^{-2}$, which is much
worse then the Lorden inequality.

%%%%%%%%%%%%%%%%%%%%%%%%%%%%%%%%%%%%%%%%%%%%%%%%%%%%%%%%%%%%%%%%%%%%%%%%%%%%%%%%%%%%%%%%%%%%%%%%%%%%%%%%%%%%%%%%%%%%%%%%%%%%%%%%%%%%%%%%
%%%%%%%%%%%%%%%%%%%%%%%%%%%%%%%%%%%%%%%%%%%%%%%%%%%%%%%%%%%%%%%%%%%%%%%%%%%%%%%%%%%%%%%%%%%%%%%%%%%%%%%%%%%%%%%%%%%%%%%%%%%%%%%%%%%%%%%%%%

\section{Proofs}
\subsection{Proofs of Theorems \ref{T1} and \ref{T2}}
We set for brevity $\tau=\tau_z$.
\begin{lemma}
\label{L1}
For all $h$ satisfying 
\begin{equation}
\label{h-cond}
\mathbf{E}[e^{hX},X\leq y]\leq1
\end{equation}
we have the inequality
\begin{equation}
\label{L1.0}
\mathbf{P}(M_\tau>x)\leq \Bigl(1+\frac{1}{e^{hx}-1}\Bigr)\mathbf{E}[\tau]\mathbf{P}(X>y)
+\mathbf{E}[\tau]\frac{ah}{e^{hx}-1}.
\end{equation}
\end{lemma}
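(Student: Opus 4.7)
I would truncate the increments at level $y$ and exploit the supermartingale structure provided by hypothesis \eqref{h-cond}. The key is to introduce the ``all-small-jumps'' event $A_n:=\{X_i\le y\text{ for all }i\le n\}$ and to work with the compensated process
\begin{equation*}
Z_n := (e^{hS_n}-1)\mathbf{1}_{A_n} - n\,\mathbf{P}(X>y).
\end{equation*}
A one-step conditional computation yields
\begin{equation*}
\mathbf{E}\bigl[(e^{hS_{n+1}}-1)\mathbf{1}_{A_{n+1}}\mid\mathcal{F}_n\bigr]-(e^{hS_n}-1)\mathbf{1}_{A_n}
=\mathbf{1}_{A_n}\bigl(\mathbf{P}(X>y)-(1-\mathbf{E}[e^{hX},X\le y])\,e^{hS_n}\bigr),
\end{equation*}
which is bounded by $\mathbf{P}(X>y)$ thanks to \eqref{h-cond}, so $Z_n$ is an $\mathcal{F}_n$-supermartingale.

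I would then set $\sigma:=\inf\{k\ge1:S_k>x\}$, so that $\{M_\tau>x\}=\{\sigma\le\tau\}$, and apply optional stopping to $Z_n$ at the bounded time $\sigma\wedge\tau\wedge N$. Passing to the limit $N\to\infty$ (justified by $\tau<\infty$ a.s., $\mathbf{E}[\tau]<\infty$, and the deterministic bound $(e^{hS_{\sigma\wedge\tau\wedge N}}-1)\mathbf{1}_{A_{\sigma\wedge\tau\wedge N}}\le e^{h(x+y)}$ following from $S_{\sigma-1}\le x$ and $X_\sigma\le y$ on $A_\sigma$), I arrive at
\begin{equation*}
\mathbf{E}\bigl[(e^{hS_{\sigma\wedge\tau}}-1)\mathbf{1}_{A_{\sigma\wedge\tau}}\bigr]\le\mathbf{E}[\tau]\,\mathbf{P}(X>y).
\end{equation*}
On $\{\sigma\le\tau,A_\sigma\}$ the integrand is at least $e^{hx}-1$, while on $\{\tau<\sigma,A_\tau\}$ it equals $e^{hS_\tau}-1\le0$ since $S_\tau\le-z$. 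Rearranging and then applying the elementary bound $1-e^u\le-u$ for $u\le0$ together with Wald's identity $\mathbf{E}[-S_\tau]=a\,\mathbf{E}[\tau]$ to the residual boundary term gives
\begin{equation*}
(e^{hx}-1)\,\mathbf{P}(\sigma\le\tau,A_\sigma)\le\mathbf{E}[\tau]\,\mathbf{P}(X>y)+ah\,\mathbf{E}[\tau].
\end{equation*}

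Finally I would split $\mathbf{P}(M_\tau>x)=\mathbf{P}(\sigma\le\tau,A_\sigma)+\mathbf{P}(\sigma\le\tau,A_\sigma^c)$ and control the second probability by Wald applied to the indicators of ``large'' increments: $\mathbf{P}(\sigma\le\tau,A_\sigma^c)\le\mathbf{E}\bigl[\sum_{k=1}^\tau\mathbf{1}_{X_k>y}\bigr]=\mathbf{E}[\tau]\,\mathbf{P}(X>y)$. Substituting the two estimates reproduces \eqref{L1.0}, with the factor $1+1/(e^{hx}-1)$ arising from combining the two $\mathbf{E}[\tau]\,\mathbf{P}(X>y)$ contributions. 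The main obstacle is pinning down the right compensated supermartingale so that \eqref{h-cond} precisely absorbs the positive one-step drift and the overshoot contribution turns into the clean additive term $ah\,\mathbf{E}[\tau]$ via the linearisation $1-e^u\le-u$ and Wald's identity; once $Z_n$ is correctly identified, the remainder is a standard truncation/optional-stopping routine.
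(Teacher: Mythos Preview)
Your proof is correct and follows essentially the same route as the paper: an exponential supermartingale built from the truncated increments, optional stopping at the crossing time and $\tau$, the linearisation $e^{u}\ge 1+u$ combined with Wald's identity $\mathbf{E}[-S_\tau]=a\,\mathbf{E}[\tau]$, and a separate Wald bound for the large-jump event. The only cosmetic difference is packaging: the paper works with the multiplicative process $W_k=\prod_{i\le k}e^{hX_i}\mathbf{1}_{\{X_i\le y\}}$ and an explicit third stopping time $\sigma_y=\min\{k:X_k>y\}$, whereas you absorb the big-jump bookkeeping into the additive compensator of $Z_n$, which makes the argument marginally more streamlined.
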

\begin{proof}
Our strategy is to truncate the random variables $X_i$ in the level $y$:
\begin{align}
	\nonumber
	\mathbf{P}\left(M_\tau > x\right) 
	&\le \mathbf{P}\left(M_\tau > x, \max_{1 \le k \le \tau} X_{k} \le y\right) 
	+ \mathbf{P}\left(\max_{1 \le k \le \tau} X_{k} > y\right)\\
	&= \mathbf{P}\left(M_\tau \mathbf{1}_{\{\max_{1 \le k \le \tau} X_{k} \le y\}} > x \right)
	+ \mathbf{P}\left(\max_{1 \le k \le \tau} X_{k} > y\right). \label{eq2.1}
\end{align}
From the Wald identity follows
\begin{equation}\label{L1.1}
\mathbf{P}\left(\max_{1 \le k \le \tau} X_{k} > y\right)
\le \mathbf{E}\left[\sum^{\tau}_{k=1} \mathbf{1}_{\{X_{k}>y\}}\right]
= \mathbf{E}\left[\tau\right]\mathbf{P}\left(X>y\right).
\end{equation}
To examine the first term on the right-hand-side of (\ref{eq2.1}) we introduce the process $\{W_k\}$ defined by
$$
W_0:=1, \quad W_k:=\prod_{i=1}^k e^{h X_i} \mathbf{1}_{\{X_i\le y\}}, \ k\ge 1.
$$
It is clear that if $h$ satisfies (\ref{h-cond}), $\{W_k\}$ is a positive supermartingale. Define 
$$
\sigma_y:=\min\{k\ge 1:X_k>y\},\ t_x:=\min\{k\ge 1:S_k>x\} \ \text{and} \ T:=\min\{\sigma_y,t_x,\tau\}.
$$
Applying the Optional Stopping Theorem to the supermartingale $\{W_{k\wedge T}\}$ leads us
$$
 1= W_0\ge \mathbf{E}[W_T]
 = \mathbf{E}\left[W_{T} \mathbf{1}_{\{t_{x} < \tau, t_{x} < \sigma_{y}\} }\right] 
  + \mathbf{E}\left[W_{T} \mathbf{1}_{\{ \tau < t_{x}, \tau < \sigma_{y}\}}\right]. 
$$
We analyse the two terms on the right-hand-side separately:
$$
\mathbf{E}\left[W_{T} \mathbf{1}_{\{t_{x} < \tau, t_{x} < \sigma_{y}\} }\right]
\ge e^{h x}\mathbf{P}(t_x<\tau<\sigma_y)
= e^{h x}\mathbf{P}\left(M_\tau\mathbf{1}_{\{\max_{1 \le k \le \tau} X_{k} \le y\}} > x\right)
$$
and
\begin{align*}
	&\mathbf{E}\left[W_{T} \mathbf{1}_{\{ \tau < t_{x}, \tau < \sigma_{y}\}}\right] 
	= \mathbf{E}\left[e^{h S_{\tau}}	\right]
			-\mathbf{E}\left[e^{h S_{\tau}} \mathbf{1}_{\{M_\tau> x\}
			 \cup \{\max_{1 \le k \le \tau}X_{k} > y\}}\right]\\
	&\hspace{0.5cm}\ge \mathbf{E}\left[e^{h S_{\tau}}	\right] 
			- e^{-h z}\left(\mathbf{P}\left(M_\tau 
			\mathbf{1}_{\{\max_{1 \le k \le \tau} X_{k} \le y\}} > x\right)
			+\mathbf{P}\bigg(\max\limits_{1 \le k \le \tau}X_{k} > y\bigg)\right).
\end{align*}
Consequently,
$$
	\mathbf{P}\left(M_\tau \mathbf{1}_{\{\max_{1 \le k \le \tau} X_{k} \le y\}} > x\right)
	\le \frac{1-\mathbf{E}\left[e^{h S_{\tau}}\right]
		+\mathbf{P}\left(\max\limits_{1 \le k \le \tau}X_{k} > y\right)}{e^{h x}-1}
$$
and hence by applying (\ref{L1.1}),
$$
	\mathbf{P}\left(M_\tau \mathbf{1}_{\{\max_{1 \le k \le \tau} X_{k} \le y\}} > x \right) \le
		\frac{1-\mathbf{E}\left[e^{h S_{\tau}}\right]+\mathbf{E}[\tau]
		\mathbf{P}\left(X > y\right)}{e^{h x}-1}.
$$
It is easy to see that
$$
	\mathbf{E}\left[e^{h S_{\tau}}\right] \ge
	 \mathbf{E}\left[1+h S_{\tau}\right]= 1+h\mathbf{E}[S_\tau]
$$
and as a result we have
\begin{equation}\label{L1.2}
\mathbf{P}\left(M_\tau \mathbf{1}_{\{\max_{1 \le k \le \tau} X_{k} \le y\}} > x \right) \le \mathbf{E}[\tau]
		\frac{ha+
		\mathbf{P}\left(X > y\right)}{e^{h x}-1}.
\end{equation}
Applying (\ref{L1.1}) and (\ref{L1.2}) to the summands in (\ref{eq2.1}) finishes the proof.
\end{proof}

To prove Theorems \ref{T1} and \ref{T2} we need to choose a specific $h$ for which (\ref{h-cond}) holds. 
The optimal choice would be the positive solution of the equation $\mathbf{E}[e^{hX},X\leq y]=1$, 
which is in the spirit of the Cramer-Lundberg condition. But it is not clear how to solve this equation.
For this reason we replace $\mathbf{E}[e^{hX},X\leq y]=1$ by the equation $\phi(h,y)=1$, where 
$\phi(h,y)$ is an appropriate upper bound for $\mathbf{E}[e^{hX},X\leq y]$.

If $A_t<\infty$, we may use a bound from the proof of Theorem 2 from \cite{FN71}, which says
\begin{equation}\label{T1.2}
\mathbf{E}[e^{hX},X\leq y]\leq 1+h\mathbf{E}[X,|X|\leq y]+\frac{e^{hy}-1-hy}{y^{t}}A_t.
\end{equation}
Using the Markov inequality we also obtain
$$
\mathbf{E}[X,|X|\leq y]\le -a-\mathbf{E}[X,X\leq -y]\leq -a+\frac{A_t}{y^{t-1}},
$$
and therefore
\begin{align*}
\mathbf{E}[e^{hX},X\leq y]&\leq 1-ha+\frac{e^{hy}-1}{y^{t}}A_t.
\end{align*}
Put $h_0:=\frac{1}{y}\log\left(1+ay^{t-1}/A_t\right)$.
It is easy to see that 
$$
-h_0 a+\frac{e^{h_0y}-1}{y^{t}}A_t\le 0
$$ 
for all $y$ such that $y^{t-1}\ge(e-1)A_t/a$ and this implies that $h_0$ satisfies (\ref{h-cond}).
Using (\ref{L1.0}) with $h=h_0$ and applying the inequality 
$$
(1+u)^{x/y}\geq 1+u^{x/y}, \ x\geq y,
$$ 
we obtain
\begin{align*}
\mathbf{P}(M_\tau>x)
&\leq \frac{A_t^{x/y}}{a^{x/y-1}}\mathbf{E}[\tau] y^{-1-(t-1)x/y}\log\left(1+ay^{t-1}/A_t\right)\\
&\hspace{1cm}+\left(1+\frac{A_t^{x/y}}{a^{x/y}}y^{-(t-1)x/y}\right)\mathbf{E}[\tau]\mathbf{P}(X>y).
\end{align*}
Thus, the proof of Theorem \ref{T1} is complete.

In order to show that one can replace $\mathbf{E}[X]$ and $A_t$ by the corresponding truncated moments, see Remark~\ref{R1}, 
we first note that analogously to (\ref{T1.2}) and by additionally using $e^x-1\le xe^x$, 
$$
\mathbf{E}[e^{hX},X\leq y]\leq 1+h\mathbf{E}[X,|X|\leq y]+h\frac{e^{hy}-1}{y^{t-1}}\mathbf{E}[|X|^t,|X|\leq y].
$$
If $\mathbf{E}[X,|X|\leq y]<0$, then 
$$
h_0:=\frac{1}{y}\log\left(1+\frac{|\mathbf{E}[X,|X|\leq y]|y^{t-1}}{\mathbf{E}[|X|^t,|X|\leq y]}\right)
$$
is strictly positive and solves
$$
h\mathbf{E}[X,|X|\leq y]+h\frac{e^{hy}-1}{y^{t-1}}\mathbf{E}[|X|^t,|X|\leq y]=0.
$$
Therefore, we may use Lemma \ref{L1} with $h=h_0$ and get an inequality with truncated moments.

To bound $\mathbf{E}[e^{hX},X\leq y]$ under the conditions of Theorem \ref{T2}
we proceed similar to the proof of Theorem 3 from \cite{N79} and get
$$
\mathbf{E}[e^{hX},X\leq y]\leq 1-ha+e^t\mathbf{Var}(X)\frac{h^2}{2}+\frac{e^{hy}-1-hy}{y^t}A_{t,+}.
$$
Following further the method from the proof of this Theorem, we split this upper bound into two parts:
$$
-\alpha ha+e^t\mathbf{Var}(X)\frac{h^2}{2}=:f_1(h),
$$ 
$$
-\beta ha+\frac{e^{hy}-1-hy}{y^t}A_{t,+}=:f_2(h).
$$
We consider $f_1$ and  $f_2$ separately. It is clear that
$$
h_1:=\frac{2\alpha a}{e^t \mathbf{Var}(X)}
$$ 
is the positive solution of the equation $f_1(h)=0$. Moreover, $f_1(h)<0$ for all $h\in(0,h_1)$.

Furthermore, it is easy to see that $f_2$ takes it's unique minimum in 
$$
h_2:=\frac{1}{y}\log\left(1+a\frac{\beta y^{t-1}}{A_{t,+}}\right).
$$ 
Since $f_2$ is convex, one has
\begin{equation}\label{ineq}
f_2(h)<0\quad\text{for all }h\in(0,h_2].
\end{equation}

The assumption in Theorem \ref{T2}(i) means that $h_1\leq h_2$. In this case, taking into account (\ref{ineq}), we obtain
$$
f_1(h_1)+f_2(h_1)<0.
$$
From the latter inequality we conclude that
$h_1$ satisfies (\ref{h-cond}) and by applying (\ref{L1.0}) with $h=h_1$ we obtain (\ref{T2.1}).

Under the conditions of Theorem \ref{T2} (ii) we have $h_2\leq h_1$. By the same arguments we get
$$
f_1(h_2)+f_2(h_2)<0.
$$
Then, applying (\ref{L1.0}) with $h=h_2$ and using the inequality $(1+u)^{x/y}\geq u^{x/y}$, we obtain (\ref{T2.2}).
%%%%%%%%%%%%%%%%%%%%%%%%%%%%%%%%%%%%%%%%%%%%%%%%%%%%%%%%%%%%%%%%%%%%%%%%%%%%%%%%%%%%%%%%%%%%%%%%%%%%%%%%%%%%%%%%%%%%%%%%%%%%%%%%%%%%%%%%%%
\subsection{Proof of Proposition \ref{P1}} 

We want to use Theorem 2.1 from \cite{BF98}. If we put $F:=F_{-X}$ the conditions (G1)-(G3) of this Theorem are fulfilled in our setting.
Hence we get 
\begin{equation}
\label{eq2.2}
\mathbf{E}[R_z] 
\le c\int_0^\infty\mathbf{P}(-X>u)du + c\int_0^\infty\int_u^{u+z}\mathbf{P}(-X>v)dvdu,
\end{equation}
where 
\begin{equation}
\label{c}
c = \frac{b^*(\epsilon a)}{a(1-\epsilon)}
\end{equation}
with $b^*(u)=\min\{v:-\mathbf{E}[X,X<-v]\le u\}$ and $\epsilon \in (0,1)$ arbitrary.
By the Theorem of Fubini we obtain
\begin{equation}
\label{eq2.3}
\int_0^\infty\mathbf{P}(-X>u)du = \mathbf{E}[-X,X<0].
\end{equation}
Changing the order of integration gives us
\begin{align}
\nonumber
\int_0^\infty\int_u^{u+z}\mathbf{P}(-X>v)dvdu
= \int_0^z v\mathbf{P}(-X>v)dv + z\int_z^\infty \mathbf{P}(-X>v)dv\\
\le z^{2-t} \int_0^\infty v^{t-1}\mathbf{P}(-X>v)dv
= \frac{z^{2-t}}{t} A_{t,-}. \label{eq2.4}
\end{align}
As you can easily see,
$$
b^*(u)\le \left(\frac{A_{t,-}}{u}\right)^{1/(t-1)},
$$
therefore by (\ref{c})
$$
c \le \frac{A_{t,-}^{1/(t-1)}}{a^{t/(t-1)}\epsilon^{1/(1-t)}(1-\epsilon)},
$$
and by minimisation over $\epsilon \in (0,1)$
\begin{equation}
\label{c2}
c \le \frac{t^{t/(t-1)} A_{t,-}^{1/(t-1)}}{(t-1)a^{t/(t-1)}}.
\end{equation}
Finally, combining (\ref{eq2.2}), (\ref{eq2.3}), (\ref{eq2.4}) and (\ref{c2}) gives us the desired result.

%%%%%%%%%%%%%%%%%%%%%%%%%%%%%%%%%%%%%%%%%%%%%%%%%%%%%%%%%%%%%%%%%%%%%%%%%%%%%%%%%%%%%%%%%%%%%%%%%%%%%%%%%%%%%%%%%%%%%%%%%%%%%%%%%%%%%%%%%%
\subsection{Proof of Theorem \ref{T3}}
We prove (\ref{T3.1}) only. The proof of the second bound goes along the same line.

Using Theorem \ref{T1} with $y=\theta(x+jz)$, we obtain
\begin{align*}
\nonumber
\mathbf{P}(M_\tau>x+jz)&\leq 
\frac{A_t^{1/\theta}\theta^{-1-(t-1)/\theta}\mathbf{E}[\tau_z]}{a^{1/\theta-1}(x+jz)^{1+(t-1)/\theta}}
\log\left(1+\frac{a\theta^{t-1}(x+jz)^{t-1}}{A_t}\right)\\
&+\left(1+\frac{A_t^{1/\theta}\theta^{-(t-1)/\theta}}{a^{1/\theta}}(x+jz)^{-(t-1)/\theta}\right)
\mathbf{E}[\tau_z]\mathbf{P}(X>\theta(x+jz)),
\end{align*}
and in view of (\ref{eq1.1}),
\begin{align}\label{T3.3}
\nonumber
\mathbf{P}(M>x)&\leq 
\frac{A_t^{1/\theta}\theta^{-1-(t-1)/\theta}}{a^{1/\theta-1}}
\mathbf{E}[\tau_z]\Sigma_1(x,z)\\
&\hspace{1cm}+\left(1+\frac{A_t^{1/\theta}}{a^{1/\theta}}x^{-(t-1)/\theta}\right)
\mathbf{E}[\tau_z]\left(\mathbf{P}(X>\theta x)+\Sigma_2(x,z)\right),
\end{align}
where
$$
\Sigma_1(x,z):=\sum_{j=0}^{\infty}\log\left(1+\frac{a\theta^{t-1}(x+jz)^{t-1}}{A_t}\right)
(x+jz)^{-1-(t-1)/\theta}
$$
and
$$
\Sigma_2(x,z):=\sum_{j=1}^{\infty}\mathbf{P}(X>\theta(x+jz)).
$$
Define 
$$
\tilde{\Sigma}_1(x,z):=\sum_{j=1}^{\infty}\log\left(1+\frac{a\theta^{t-1}(x+jz)^{t-1}}{A_t}\right)
(x+jz)^{-1-(t-1)/\theta}.
$$
The summands in this sum are strictly decreasing, so we conclude by the integral criteria for sums:
\begin{align*}
\tilde{\Sigma}_1(x,z)
\le &\sum_{j=1}^{\infty}\int_{j-1}^{j}\log\left(1+\frac{a\theta^{t-1}(x+uz)^{t-1}}{A_t}\right)
(x+uz)^{-1-(t-1)/\theta}du\\
= &\frac{1}{z} \int_x^\infty \log\left(1+\frac{a\theta^{t-1}w^{t-1}}{A_t}\right)
w^{-1-(t-1)/\theta} dw
\end{align*}
and further by integration by parts,
\begin{align*}
&\frac{1}{z}\int_x^\infty \log\left(1+\frac{a\theta^{t-1}w^{t-1}}{A_t}\right)
w^{-1-(t-1)/\theta} dw\\
&\hspace{1cm} \le \frac{\theta}{z(t-1)}\log\left(1+\frac{a\theta^{t-1}x^{t-1}}{A_t}\right)x^{-(t-1)/\theta}
+\frac{\theta^2}{z(t-1)}x^{-(t-1)/\theta}.
\end{align*}
Therefore, for all $x$ sufficing $x^{t-1}\ge\theta^{1-t}(e^{\theta}-1)A_t a^{-1}$ and $x\ge z(t-1)\theta^{-1}$,
$$
\Sigma_1(x,z)
\le \frac{3 \theta}{z(t-1)}\log\left(1+\frac{a\theta^{t-1}x^{t-1}}{A_t}\right) x^{-(t-1)/\theta}.
$$
Furthermore, it is easy to see that
\begin{equation}
\label{eq2.4.1}
\Sigma_2(x,z)\leq
\sum_{j=1}^{\infty}\int_{j-1}^{j}\mathbf{P}(X>\theta (x+u z))du=\frac{1}{\theta z}G(\theta x).
\end{equation}
and Theorem \ref{T3} is proved.

%%%%%%%%%%%%%%%%%%%%%%%%%%%%%%%%%%%%%%%%%%%%%%%%%%%%%%%%%%%%%%%%%%%%%%%%%%%%%%%%%%%%%%%%%%%%%%%%%%%%%%%%%%%%%%%%%%%%%%%%%%%%%%%%%%%%%%%%%%
\subsection{Proof of Theorem \ref{T4}}
Foss, Korshunov and Zachary have shown, see Theorem 5.1 in \cite{FKZ}, that for any random walk with the drift $-a$ and $x_a$ with 
$x_a\to \infty$ as $a\to 0$ one has the following lower bound:
$$
 \liminf_{a \to 0} \frac{\mathbf{P}(M^{(a)}>x_a)}{a^{-1}G(x_a)} \ge 1.
$$
It follows from the regular variation of $\mathbf{P}(X^{(0)}>u)$, that
\begin{equation}
 \label{eq2.4.2}
 G(x_a) \sim \frac{1}{r-1}x_a^{-r+1}L(x_a) \quad \text{as } a\to \infty,
\end{equation}
therefore
$$
 \mathbf{P}(M^{(a)}\geq x_a) 
 \ge (1+o(1))\frac{x_a^{-r+1}L(x_a)}{(r-1)a} \quad \text{as } a \to 0. 
$$
Thus, we only have to derive an upper bound. During this proof we assume $a$ to be sufficiently small in every inequality. 
We want to apply Theorem \ref{T3} (ii) with $t<r$. 
It is clear that
$$
A_{t,+}^{(a)}:=\mathbf{E}[(X^{(a)})^t,X^{(a)}>0]\leq \mathbf{E}[(X^{(0)})^t,X^{(0)}>0],
$$
therefore $A_{t,+}^{(a)}$ is finite for $t<r$ and
$$
\lim_{a\to0}A_{t,+}^{(a)}=A_{t,+}^{(0)}>0.
$$
Further, we have to show that (\ref{T2.2.1}) is fulfilled for $y=\theta x$ under our assumptions.
Since the function $y^{-1}\log(1+\beta a y^{t-1}/A_{t,+}^{(a)})$ is decreasing for $y\gg a^{1/(t-1)}$, we have
the following bound for $x_a\geq ca^{-1}\log a^{-1}$:
\begin{align*}
\frac{1}{\theta x_a}\log\left(1+\frac{\beta \theta^{t-1}a x_a^{t-1}}{A_{t,+}^{(a)}}\right)
&\leq\frac{a}{\theta c\log a^{-1}}\log\left(1+\frac{\beta \theta^{t-1}c^{t-1}}{A_{t,+}^{(a)}}a^{2-t}\log^{t-1}a^{-1}\right)\\
&=\frac{(t-2)}{\theta c}a(1+o(1)).
\end{align*}
This implies that if $c>e^r(r-2)\sigma^2/2$ and $\theta=(t-2)/(r-2)$, we can choose $\alpha<1$ so close to $1$ that
$x_a$ satisfies (\ref{T2.2.1}).

We take $z=z_a$ satisfying $a^{-1}\ll z\ll x_a$. Then, combining (\ref{tau}) and (\ref{lorden}), we get
\begin{equation}\label{eq2.5}
\frac{\mathbf{E}[\tau_z]}{z}\sim\frac{1}{a}\quad\text{as }a\to0.
\end{equation}
Since $(t-1)/\theta=(t-1)(r-2)/(t-2)>r-1$, 
\begin{equation}
\label{eq2.6}
\beta^{-1/\theta} \frac{\mathbf{E}[\tau_z]}{z}\log\left(1+\frac{\beta \theta^{t-1} a x_a^{t-1}}{A_{t,+}^{(a)}}\right) x_a^{-(t-1)/\theta}=
o\left(a^{-1}x_{a}^{-r+1}L(x_a)\right).
\end{equation}
Further, it follows from the condition $z=o(x)$ and the regular variation of $\mathbf{P}(X^{(0)}>x)$ that
\begin{equation}
 \label{eq2.6.1} 
 z\mathbf{P}(X^{(a)}>x_a)=o\left(x_{a}^{-r+1}L(x_a)\right).
\end{equation}
Combining (\ref{eq2.5}) with (\ref{eq2.6.1}) and (\ref{eq2.4.2}), we obtain
\begin{align}
\label{eq2.7}
\nonumber
&\left(1+\left(\frac{A_{t,+}^{(a)}}{\beta\theta^{t-1}ax_a^{t-1}}\right)^{1/\theta}\right)\mathbf{E}[\tau_z]
\left(\frac{1}{\theta z}G(\theta x_a)+\mathbf{P}(X^{(a)}>\theta x_a)\right)\\
&\hspace{6cm} \sim \theta^{-r}(r-1)^{-1}a^{-1}x_a^{-r+1}L(x_a)
\end{align}
and plugging (\ref{eq2.6}) and (\ref{eq2.7}) into (\ref{T3.2}) gives us
$$
\limsup_{a\to0}\frac{\mathbf{P}(M^{(a)}>x_a)}{a^{-1}x_a^{-r+1}L(x_a)}\leq \theta^{-r}(r-1)^{-1}.
$$
To complete the proof it suffices to note, that we can choose $\theta$ arbitrary close to 1 by choosing $t$ close to $r$. This implies that
the previous inequality is valid even with $\theta=1$.
%%%%%%%%%%%%%%%%%%%%%%%%%%%%%%%%%%%%%%%%%%%%%%%%%%%%%%%%%%%%%%%%%%%%%%%%%%%%%%%%%%%%%%%%%%%%%%%%%%%%%%%%%%%%%%%%%%%%%%%%%%%%%%%%%%%%%%%%%%
\subsection{Proof of Theorem \ref{T5}}
We again need an upper bound only. Let $a$ be sufficiently small during this proof.

It follows from the assumptions in the theorem that $S_n^{(0)}/c_n$ converges weakly to a stable law of index $r$. The sequence $c_n$ can be 
taken from the equation $c_n^{-r}L(c_n)=1/n$. It is known that the function $g(a)$ in the heavy-traffic approximation can be defined by the relations
$$
g(a)=1/c_{n_a}\text{ and }an_a\sim c_{n_a}.
$$
The latter can be rewritten as
$$
c_{n_a}\sim a\frac{(c_{n_a})^r}{L(c_{n_a})}.
$$
From this we infer that (\ref{T5.1}) is equivalent to 
\begin{equation}
\label{g}
  \frac{a x_a^{r-1}}{L(x_a)} \to \infty \quad \text{as } a \to 0.
\end{equation}
We want to apply Theorem \ref{T1} with $-\mathbf{E}[X^{(a)},|X^{(a)}|\leq y]$ and $A_2(y)$ instead of $a$ and $A_2$ respectively with $y=\theta x$.
According to Remark \ref{R1} we have to show that $\mathbf{E}[X^{(a)},|X^{(a)}|\leq \theta x_a]$ is negative. Using the Markov inequality, we have
$$
\mathbf{E}[X^{(a)},|X^{(a)}|\leq \theta x_a]\leq -a+(\theta x_a)^{-1}\mathbf{E}[(\min\{0,X^{(0)}\})^2].
$$
In view of (\ref{g}), $ax_a\to\infty$. Therefore,
$$
\mathbf{E}[X^{(a)},|X^{(a)}|\leq \theta x_a]\leq -a(1+o(1)).
$$
Furthermore,
\begin{equation}
 \label{ord_A}
 A_2(y) \sim \frac{r}{2-r}y^{2-r}L(y)
\end{equation}
and consequently by $-\mathbf{E}[X^{(a)},|X^{(a)}|\leq \theta x_a] \sim a$,
\begin{align}
 \nonumber
 &A_2^{1/\theta}(\theta x_a)\mathbf{E}[\tau_z] \frac{(-\mathbf{E}[X^{(a)},|X^{(a)}|\leq \theta x_a])^{1-1/\theta}}{\theta^{1+1/\theta}x_a^{1+1/\theta}}
\log\left(1-\frac{\theta x_a\mathbf{E}[X^{(a)},|X^{(a)}|\leq \theta x_a]  }{A_2(\theta x_a)}\right)\\
 \label{eq2.8}
 &\hspace{0.5cm}\leq (1+o(1)) k_1 \mathbf{E}[\tau_z]\mathbf{P}(X^{(a)}>x_a)\log\left(1+k_2 \frac{ax_a^{r-1}}{L(x_a)}\right)
  \left(\frac{a x_a^{r-1}}{L(x_a)}\right)^{-(1/\theta-1)}
\end{align}
with $k_1$ and $k_2$ appropriate. Then, (\ref{g}) implies that 
\begin{equation}
 \label{eq2.9}
 \log\left(1+c_2 \frac{ax_a^{r-1}}{L(x_a)}\right)\left(\frac{a x_a^{r-1}}{L(x_a)}\right)^{-(1/\theta-1)}
 =o(1).
\end{equation}
Further,
$$
 \frac{A_2^{1/\theta}(\theta x_a)}{a^{1/\theta}}\theta^{-1/\theta} x_a^{-1/\theta}
 \sim k_3\left(\frac{a x_a^{r-1}}{L(x_a)}\right)^{-1/\theta}
$$
with $k_3$ suitable and hence by (\ref{g}),
\begin{equation}
\label{eq2.10}
 \left(1+\frac{A_2^{1/\theta}(\theta x_a)}{a^{1/\theta}}\theta^{-1/\theta} x_a^{-1/\theta}\right)
 = (1+o(1)).
\end{equation}
Then, combining (\ref{eq2.8}), (\ref{eq2.9}) and (\ref{eq2.10}), Theorem \ref{T1} with $t=2$ and $y=\theta x_a$ gives us
$$
 \mathbf{P}(M_\tau^{(a)}>x_a)\le (1+o(1))\theta^{-r}\mathbf{E}[\tau_z]\mathbf{P}(X^{(a)}>x_a),
$$
where $\theta \in (0,1)$ is arbitrary. Hence by $\theta \to 1$,
$$
 \mathbf{P}(M_\tau^{(a)}>x_a)\le (1+o(1))\mathbf{E}[\tau_z]\mathbf{P}(X^{(a)}>x_a).
$$
By the summation formula (\ref{eq1.1}) we get a bound for the total maximum:
\begin{equation}
\label{eq2.11}
 \mathbf{P}(M^{(a)}>x_a) 
 \le (1+o(1))\mathbf{E}[\tau_z]\sum_{j=0}^\infty \mathbf{P}(X^{(a)}>x_a+j z).
\end{equation}
Combining (\ref{eq2.4.1}) and (\ref{eq2.4.2}) with $a^{-1} \ll z \ll x_a$ gives us
\begin{align*}
 \sum_{j=0}^\infty \mathbf{P}(X^{(a)}>x_a+j z)
 &\le (1+o(1))\left(x_a^{-r}L(x_a)+\frac{x_a^{-r+1}L(x_a)}{z(r-1)}\right)\\
 &\sim (1+o(1))\frac{x_a^{-r+1}L(x_a)}{z(r-1)}
\end{align*}
and regarding (\ref{eq2.5}) completes the proof.

\vspace{12pt}

{\bf Acknowledgement.} We are grateful to Denis Denisov for useful references.
%%%%%%%%%%%%%%%%%%%%%%%%%%%%%%%%%%%%%%%%%%%%%%%%%%%%%%%%%%%%%%%%%%%%%%%%%%%%%%%%%%%%%%%%%%%%%%%%%%%%%%%%%%%%%%%%%%%%%%%%%%%%%%%%%%%%%%%%%%%

\end{document}